\documentclass[leqno,11pt, a4paper]{amsart} 
\setlength{\textheight}{23cm}
\setlength{\textwidth}{16cm}
\setlength{\oddsidemargin}{0cm}
\setlength{\evensidemargin}{0cm}
\setlength{\topmargin}{0cm}
\usepackage{amssymb}
\usepackage{amsmath}
\usepackage{amsthm}
\usepackage{amscd}
\usepackage{graphicx}
\usepackage[dvips]{color}
\usepackage[all]{xy}
\usepackage[dvipdfmx]{hyperref}
\usepackage{mathtools}
\usepackage{palatino}
\usepackage{comment}
%
%
\makeatletter
    
    \@addtoreset{equation}{section}
\makeatother
%

%

%
\theoremstyle{plain} 
\newtheorem{theorem}{\indent\bf Theorem}[section]

\newtheorem{lemma}[theorem]{\indent\bf Lemma}

\newtheorem{proposition}[theorem]{\indent\bf Proposition}
\newtheorem{claim}[theorem]{\indent\bf Claim}
\newtheorem{question}[theorem]{\indent\bf Question}
\theoremstyle{definition} 

%

%
%
\newcommand{\N}{\mathbb N}
\newcommand{\Z}{\mathbb Z}
\newcommand{\Aut}{\mathrm{Aut}}
\newcommand{\FR}{\mathit{FR}}
%

\newcommand{\ci}[2]{\cite[#1]{#2}}

\begin{document}

\title[Weak density of orbit equivalence classes]
{Weak density of orbit equivalence classes and free products of infinite abelian groups} 

\author[Takaaki Moriyama]{Takaaki Moriyama} 

\address{
Graduate School of Mathematical Sciences, The University of Tokyo, Komaba, Tokyo 153-8914, Japan
}
\email{takmori570@gmail.com}

\date{\today}

\maketitle

\begin{abstract}
We show that if a countable group $G$ is the free product of infinite abelian groups, then for every free, probability-measure-preserving (p.m.p.) action of $G$, its orbit equivalence class is weakly dense in the space of p.m.p.\ actions of $G$.
This extends Lewis Bowen's result for free groups.
\end{abstract}

\section{Introduction}

In this paper, we are concerned with the space of probability-measure-preserving (p.m.p.) actions of a countable group, elaborated in Kechris' monograph \cite{ke}. 
Before stating our main result, let us briefly review some known results on the space of p.m.p.\ actions.

Throughout the paper, let $G$ be a countable group, and let $(X, \mu)$ be a non-atomic standard probability space unless otherwise stated.
Let $\Aut(X, \mu)$ be the group of all measure-preserving Borel automorphisms of $(X,\mu)$, where two of them are identified if they agree $\mu$-almost everywhere. 
The \textit{weak topology} on $\Aut(X,\mu)$ is defined as the topology generated by the sets
\[\{\, R \in \Aut(X,\mu) \mid \mu(R(P) \bigtriangleup S(P)) < \varepsilon  \text{ for all } P \in \mathcal P \,\}\]
for $S\in \Aut(X, \mu)$, $\varepsilon >0$, and a finite Borel partition $\mathcal P$ of $X$.
It is known that $\Aut(X,\mu)$ is a Polish group with respect to the weak topology (\ci{\S 10 (A)}{ke}).

We mean by a \textit{p.m.p.}\ action of $G$ on $(X, \mu)$ a homomorphism from $G$ into $\Aut(X, \mu)$. 
Let $A(G, X, \mu)$ denote the set of all p.m.p.\ actions of $G$ on $(X,\mu)$, and let $\FR(G,X,\mu)$ denote the set of all essentially free p.m.p.\ actions of $G$ on $(X,\mu)$.
As being in \cite{ke}, the set $A(G,X,\mu)$ is naturally identified with the subspace of the product space $\prod_G\Aut(X,\mu)$ equipped with the product topology of the weak topology.
For $\alpha \in A(G,X,\mu)$ and $g\in G$, we write $g^\alpha = \alpha(g)$. 
We say that two actions $\alpha,\beta \in A(G,X,\mu)$ are \textit{measure-conjugate} if there exists $R \in \Aut(X,\mu)$ such that $g^\alpha x = Rg^\beta R^{-1}x$ for all $g \in G$ and $\mu$-almost every $x \in X$. 
We then write $\alpha = R\beta R^{-1}$. 
For $\alpha \in A(G,X,\mu)$, let $[\alpha]_\mathit{MC}$ denote the set of all actions $\beta \in A(G,X,\mu)$ that are measure-conjugate to $\alpha$.

The space $A(G,X,\mu)$ reflects many analytic properties of the group $G$.
For example, for every infinite amenable group $G$ and for every $\alpha \in \FR(G,X,\mu)$, the set $[\alpha]_\mathit{MC}$ is weakly dense in $A(G,X,\mu)$ (\ci{\S 3.1}{ag}; see also \ci{Remark, p.91}{ke}). 
By contrast, every non-amenable group $G$ has an uncountable antichain in $\FR(G,X,\mu)$ with respect to the pre-order of weak containment (\ci{Remark 4.3}{td}). 
We say that $\alpha \in A(G,X,\mu)$ is \textit{weakly contained} in $\beta \in A(G,Y,\nu)$ (denoted by $\alpha \prec \beta$) if for any Borel subsets $A_1,\ldots,A_n \subset X$ and any finite subset $F \subset G$ and any $\varepsilon > 0$, there exist Borel subsets $B_1,\ldots,B_n \subset Y$ such that for any $g \in F$ and $i,j \in \{1,\ldots,n\}$, we have
\[
|\mu(g^\alpha(A_i) \cap A_j) - \nu(g^\beta(B_i) \cap B_j)| < \varepsilon.
\]
For two actions $\alpha, \beta \in A(G,X,\mu)$, it is known that $\alpha$ is weakly contained in $\beta$ if and only if $\alpha$ belongs to the weak closure of $[\beta]_\mathit{MC}$ (\ci{Theorem 2.3}{bk}).
For more details on the weak topology on $A(G,X,\mu)$ and weak containment, we refer the reader to \cite{bk} and \cite{ke}.

We say that two actions $\alpha, \beta \in A(G,X,\mu)$ are \textit{orbit equivalent} if there exists $R \in \Aut(X,\mu)$ such that $G^\alpha x = RG^\beta R^{-1} x$ for $\mu$-almost every $x \in X$. 
By definition, two measure-conjugate actions in $A(G,X,\mu)$ are orbit equivalent.
For $\alpha \in A(G,X,\mu)$, let $[\alpha]_\mathit{OE}$ denote the set of all actions $\beta \in A(G,X,\mu)$ that are orbit equivalent to $\alpha$. 

Lewis Bowen proved the following:

\begin{theorem}[\ci{Theorem 1.1}{bo}]\label{thm-bowen}
Let $G$ be a free group with at most countably many generators.
Then for any $\alpha \in \FR(G,X,\mu)$, the set $[\alpha]_\mathit{OE}$ is weakly dense in $A(G, X, \mu)$.
\end{theorem}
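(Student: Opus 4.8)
The plan would be to bypass the weak-containment criterion of \ci{Theorem 2.3}{bk}, which describes only the weak closure of $[\alpha]_\mathit{MC}$ --- a subset of $[\alpha]_\mathit{OE}$ whose closure can be strictly smaller --- and instead to build approximants by hand. Fix $\beta\in A(G,X,\mu)$, a finite $F\subset G$, a finite Borel partition $\mathcal P$, and $\varepsilon>0$; the goal is to produce $\gamma\in[\alpha]_\mathit{OE}$ with $\mu(g^\gamma(P)\triangle g^\beta(P))<\varepsilon$ for all $g\in F$, $P\in\mathcal P$. Absorbing the conjugation in the definition of orbit equivalence, I fix the orbit relation $\mathcal R:=\mathcal R_\alpha$ and look for $\gamma$ with $g^\gamma\in[\mathcal R]$ for all $g$ and with $\langle a_i^\gamma\rangle$ generating $\mathcal R$. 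I would first reduce to $\alpha$ (hence $\mathcal R$) ergodic, the general case following by relativizing the construction over the ergodic decomposition of $\alpha$ transported onto a measurable partition of a copy of $X$. Since each coordinate map $\gamma\mapsto g^\gamma$ is continuous on $A(G,X,\mu)$, it is enough to find $\gamma$ as above with each $a_i^\gamma$ within a prescribed $\delta$, on a prescribed finite partition $\mathcal P'$ refining $\mathcal P$, of $a_i^\beta$; and since $\mathcal R$ is ergodic, $[\mathcal R]$ is weakly dense in $\Aut(X,\mu)$, so such $\delta$-closeness of a single $a_i^\gamma$ is free --- the only real constraint is that the chosen $a_i^\gamma$ together generate $\mathcal R$. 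When $G=F_\infty$ this is immediate: choose $a_i^\gamma\in[\mathcal R]$ weakly close to $a_i^\beta$ for the finitely many generators $a_1,\dots,a_k$ occurring in $F$, and set $a_{k+j}^\gamma:=a_j^\alpha$ for $j\ge 1$; then $\langle a_i^\gamma\rangle\supseteq\langle a_j^\alpha\rangle=\mathcal R$, so $\gamma\in[\alpha]_\mathit{OE}$ lies in the given weak neighbourhood of $\beta$.

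The substantive case is $G=F_n$ with $n<\infty$: there are no spare generators, and since $F_n$ has fixed price $n$ the relation $\mathcal R$ has cost $n$, which forces the tuple $(a_1^\gamma,\dots,a_n^\gamma)$ of any $\gamma\in[\alpha]_\mathit{OE}$ to be a treeing of $\mathcal R$ (a graphing of a treeable relation achieving its cost is a treeing, and then $\gamma$ is automatically free). So I must exhibit a treeing $(T_1,\dots,T_n)$ of $\mathcal R$ with each $T_i$ weakly $\delta$-close to $a_i^\beta$ on $\mathcal P'$. The plan is a Rokhlin-type construction: using a Rokhlin lemma for the ergodic relation $\mathcal R$, build inside $\mathcal R$, on a set $X_0$ of measure $>1-\varepsilon'$, a finite ``model of $\beta$'' adapted to $\mathcal P'$ --- a tower-like decomposition carrying partial automorphisms $\sigma_i$ that imitate the action of $a_i^\beta$ on the blocks of $\mathcal P'$ --- and on the small complement, after a small modification of the $\sigma_i$, splice in a treeing of $\mathcal R$ restricted to an enlargement of that complement, linking all of the pieces into automorphisms $T_1,\dots,T_n$. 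The point that makes this work even when $\mathcal R$ is strongly ergodic (so that the $T_i$ cannot exactly preserve $\mathcal P'$) is that the corrections needed to upgrade the $\sigma_i$ to a genuinely spanning treeing can be confined to a set of measure $<\varepsilon'$ and yet carry all of the missing cost, because each such small correction propagates along long excursions of the treeing through $X_0$: the relevant restricted relations are recovered by far-reaching excursions rather than by local moves, so $T_i$ stays $\varepsilon$-close to $a_i^\beta$ on $\mathcal P'$. Setting $a_i^\gamma:=T_i$ then gives the desired $\gamma\in[\alpha]_\mathit{OE}$.

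The main obstacle I expect is exactly this construction: carrying out the Rokhlin-type embedding of a $\beta$-model compatibly with the treeing and cost structure of $\mathcal R$, and controlling the splice so that the resulting graphing is simultaneously a forest --- which is what guarantees $\langle T_i\rangle=\mathcal R$ rather than a proper subrelation of the same cost --- and weakly close to $\beta$. Here Gaboriau's cost machinery (fixed price of $F_n$, the induced-cost formula $\operatorname{cost}(\mathcal R|_B)-1=\mu(B)^{-1}(\operatorname{cost}(\mathcal R)-1)$ for ergodic $\mathcal R$, and the fact that a cost-achieving graphing of a treeable relation is a treeing), together with a careful bookkeeping of excursions, does the real work. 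A secondary, more routine point is the reduction to ergodic $\alpha$, which requires the approximating data to be chosen uniformly over the ergodic decomposition of $\alpha$.
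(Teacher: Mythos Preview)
First, a framing remark: Theorem~\ref{thm-bowen} is quoted from Bowen~\cite{bo} and the paper does not reprove it as such, but the paper's own Theorem~\ref{main} (with each $G_i=\Z$) together with the passage to infinitely many factors in the proof of Theorem~\ref{thm-intro} does yield an independent proof, and that is the natural point of comparison.

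Your route is genuinely different from the paper's. The paper never invokes cost, fixed price, or treeings. Instead it (i) uses the good-partition Lemma~\ref{goodpart} to conjugate $\alpha$ to an $\alpha'$ whose restriction to each free factor has, on most ergodic components, the same $\phi$-statistics as $\beta$; (ii) for each generator builds Rokhlin towers for $\alpha'_i$ and $\beta_i$ over a common large $(F_i,\varepsilon')$-invariant tile $T$ (Claim~\ref{erg+Rohlin}, via Lemma~\ref{Rohlin}); (iii) refines the bases so that tower levels are $\phi$-monochromatic and chooses a permutation $\sigma_s$ of $T$ matching the $\phi$-colours on all but $O(\varepsilon'|\mathcal A||T|)$ levels (Claim~\ref{basepart}); and (iv) conjugates $\alpha'_i$ by the level-permuting map $S_i\in[\alpha'_i]$ to get $\alpha''_i\in[\alpha'_i]_\mathit{SO}$ weakly close to $\beta_i$ (Claim~\ref{weakapp}). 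No reduction to ergodic $\alpha$ is made; the non-ergodicity is absorbed by Lemma~\ref{goodpart}. What this buys is a construction that works uniformly for any infinite abelian free factor, not just $\Z$; what your cost-based approach would buy, were it completed, is a more structural explanation tied to the treeability of $\mathcal R_\alpha$.

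That said, your sketch has a genuine gap in the finite-rank case. You correctly observe that any $\gamma\in[\alpha]_\mathit{OE}$ must give a treeing of $\mathcal R$, and you propose to manufacture one by approximating each $a_i^\beta$ inside $[\mathcal R]$ and then ``splicing in'' a treeing on a small set so that the result spans $\mathcal R$, is acyclic, and remains weakly close to $\beta$. But you have not supplied a mechanism that achieves all three simultaneously: weak density of $[\mathcal R]$ gives closeness, and the induced-cost formula tells you how much cost must live on the complement, but neither prevents the splice from creating cycles nor guarantees the glued graphing generates all of $\mathcal R$ rather than a proper subrelation of the same cost. The ``long excursions'' heuristic is suggestive but not an argument. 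Separately, your reduction to ergodic $\alpha$ is not routine: $\beta$ need not respect the ergodic decomposition of $\alpha$, so ``relativizing over it'' does not by itself explain how to approximate an arbitrary $\beta$ fiberwise. The paper sidesteps both issues entirely via the tower-permutation construction.
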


Some applications of this result are given in \cite[Remarks 1 and 2]{bo}. 
As in \cite[Theorem 1.2]{bo}, it turns out from orbit equivalence rigidity that some non-amenable groups do not satisfy the conclusion of Theorem \ref{thm-bowen}. 
In addition to this, any countable group with property (T) does not satisfy the conclusion of Theorem \ref{thm-bowen}.
In fact, if a countable group $G$ has property (T), then the set of all ergodic p.m.p.\ actions of $G$ on $(X,\mu)$ is closed in $A(G,X,\mu)$ and has no interior (see \ci{Theorem 12.2, i)}{ke}, the proof of which is based on \ci{Theorem 1}{gw}).  
It follows from the result of \ci{\S 3.1}{ag} mentioned above that all infinite amenable groups satisfy the conclusion of Theorem \ref{thm-bowen}.
Bowen asked the following:

\begin{question}[\ci{Question 1}{bo}]\label{question}
Which countable groups $G$ satisfy the conclusion of Theorem \ref{thm-bowen}?
For example, do all strongly treeable groups (e.g., $\textit{PSL}(2, \Z)$) satisfy this conclusion?
\end{question}

The goal of this paper is to present a new class of examples of such groups in the following:

\begin{theorem}\label{thm-intro}
Let $G$ be the free product of at most countably many, countably infinite abelian groups.
Then for any $\alpha \in \FR(G,X,\mu)$, the set $[\alpha]_\mathit{OE}$ is weakly dense in $A(G,X,\mu)$.
\end{theorem}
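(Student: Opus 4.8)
\emph{Proof plan.}\quad Write $G=\ast_{i\in I}A_i$. A homomorphism out of a free product is the same datum as a family of homomorphisms out of the factors, so $A(G,X,\mu)$ is canonically identified with $\prod_{i\in I}A(A_i,X,\mu)$ (product of the weak topologies), and we write an action $\gamma$ of $G$ as a family $(\gamma_i)_{i\in I}$ with $\gamma_i\in A(A_i,X,\mu)$. Fix $\alpha\in\FR(G,X,\mu)$, set $\mathcal R=\mathcal R_\alpha$, fix a target $\beta=(\beta_i)_i$, and fix a basic weak neighbourhood $V$ of $\beta$ determined by a finite $F\subset G$, a finite Borel partition $\mathcal P$, and $\varepsilon>0$. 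The plan is to produce $\gamma\in\FR(G,X,\mu)$ with $\mathcal R_\gamma=\mathcal R$ — whence $\gamma\in[\alpha]_\mathit{OE}$, taking $R=\mathrm{id}$ — and $\gamma\in V$. Since the finitely many elements of $F$ involve, in their normal forms, letters from only finitely many factors, there is a finite $J\subset I$ such that $\gamma\in V$ is guaranteed once each $\gamma_i$ $(i\in J)$ is weakly close enough to $\beta_i$ on the finitely many relevant group elements; the factors $\gamma_i$ with $i\notin J$ are subject to no closeness requirement and are used only to ensure freeness of $\gamma$ and the equality $\mathcal R_\gamma=\mathcal R$.

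The decisive input — and the place where the hypothesis that the $A_i$ are abelian, hence amenable, is used — is a realization lemma of the following shape. \emph{Let $\mathcal H$ be an ergodic hyperfinite p.m.p.\ equivalence relation with infinite classes on $(X,\mu)$, and let $A$ be a countably infinite amenable group. Then the set of free p.m.p.\ actions $\rho$ of $A$ with $\mathcal R_\rho=\mathcal H$ is weakly dense in $A(A,X,\mu)$.} I would prove this in two moves. First, using the Ornstein--Weiss Rokhlin lemma for amenable groups one weakly approximates a given target $\rho_0$ by a ``combinatorial tower action'': one built on an almost-invariant Rokhlin tower whose columns carry a prescribed finite family of $\mathcal P$-labels with prescribed frequencies. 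Second, one realizes the same combinatorial model — same base measure, same column/label frequencies — inside the full group $[\mathcal H]$; here ergodicity of $\mathcal H$ enters, as its Rokhlin towers can be chosen with arbitrary prescribed base measure and, after a re-routing of the action within the $\mathcal H$-classes, with any prescribed column/label distribution (the copying/re-cutting technique behind Dye's theorem, which applies because $\rho_0$ is measure-preserving, so its column statistics are consistent with $\mu(\mathcal P)$). A final small (uniform, hence weak) perturbation inside $[\mathcal H]$ makes the resulting $A$-action both free and a generator of all of $\mathcal H$. Amenability of $A$ is essential: a hyperfinite $\mathcal H$ cannot contain the orbit relation of a free action of a non-amenable group.

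Granting this lemma, assemble $\gamma$ as follows; first suppose $\mathcal R$ is ergodic. Because $\alpha$ is free, $\mathcal R=\mathcal R_\alpha$ is the free product, in Gaboriau's sense, of the subrelations $\mathcal R_{\alpha_i}$ (orbit relations of the restrictions of $\alpha$ to the factors $A_i$), each of which is the orbit relation of a free action of an infinite amenable group and so is aperiodic, hyperfinite, and treeable; hence $\mathcal R$ is treeable, of cost $\sum_{i\in I}\mathrm{cost}(A_i)=|I|$ (read $\infty$ if $I$ is infinite). By the structure theory of treeable ergodic relations one may moreover choose a freely independent family $(\mathcal S_i)_{i\in I}$ of \emph{ergodic} hyperfinite subrelations of $\mathcal R$ whose free product is $\mathcal R$. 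Now for $i\in J$ apply the realization lemma to $\mathcal H=\mathcal S_i$ and $A=A_i$ to obtain a free $A_i$-action $\gamma_i$ with $\mathcal R_{\gamma_i}=\mathcal S_i$ that is weakly as close to $\beta_i$ as we wish; for $i\notin J$ let $\gamma_i$ be any free $A_i$-action with $\mathcal R_{\gamma_i}=\mathcal S_i$. Since the $\mathcal S_i$ are freely independent and each $\gamma_i$ is free, the family $\gamma=(\gamma_i)_i$ is a free action of $G$ with $\mathcal R_\gamma=\bigvee_i\mathcal S_i=\mathcal R$; thus $\gamma\in[\alpha]_\mathit{OE}$, while $\gamma\in V$ by the choice of the $\gamma_i$ $(i\in J)$. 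If $\mathcal R$ is not ergodic, carry out the same construction fibrewise over the ergodic decomposition of $\mathcal R$ — the target $\beta$ need not respect this decomposition, but the approximant produced does, which is compatible with approximation on the finite data $V$. As $V$ was an arbitrary basic neighbourhood of an arbitrary $\beta$, $[\alpha]_\mathit{OE}$ is weakly dense in $A(G,X,\mu)$.

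I expect the realization lemma of the second paragraph to be the main obstacle: transporting the Ornstein--Weiss combinatorial tower into the full group $[\mathcal H]$ with enough control on the partition picture, and simultaneously on freeness and on generation of all of $\mathcal H$, is the only genuinely technical point, and it is precisely what forces each free factor to be amenable. The two supporting ingredients of the third paragraph — that $\mathcal R_\alpha$ is treeable, and that an ergodic treeable relation of a given cost decomposes as a free product of ergodic hyperfinite subrelations with the prescribed ``valence'' $|I|$ — rest on Gaboriau's and Hjorth's work on costs and treeings; one should check they apply in the (possibly non-ergodic, possibly $|I|=\infty$) generality used above, or else replace the exact free-product decomposition by building $\gamma$ with $\mathcal R_\gamma\subseteq\mathcal R$ and invoking Gaboriau's theorem that a subrelation of full finite cost of a treeable relation coincides with it.
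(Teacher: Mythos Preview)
Your architecture is genuinely different from the paper's and is conceptually attractive, but the step you yourself flag as uncertain is a real gap. You need, for ergodic $\mathcal R_\alpha$, a free-product decomposition $\mathcal R_\alpha=\ast_{i\in I}\mathcal S_i$ with each $\mathcal S_i$ \emph{ergodic} hyperfinite. Hjorth's theorem produces a free $F_k$-action generating $\mathcal R_\alpha$, hence a free-product decomposition into hyperfinite pieces, but ergodicity of those pieces is not part of the conclusion, and I do not know a reference that supplies it in the generality you need (finite or infinite $I$, then fibrewise for non-ergodic $\mathcal R_\alpha$). Your fallback --- build $\gamma$ with $\mathcal R_\gamma\subset\mathcal R_\alpha$ and invoke equality by cost --- presupposes that the $\gamma_i$ you construct are freely independent and each generates a full cost-$1$ subrelation of $\mathcal R_\alpha$, neither of which your sketch guarantees once you drop the exact $\mathcal R_{\gamma_i}=\mathcal S_i$. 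And the ``fibrewise over the ergodic decomposition'' move for non-ergodic $\alpha$ is only a gesture: the target $\beta$ does not respect that decomposition, so you must produce a measurable family of fibrewise approximants that is globally weak-close to $\beta$, which is exactly the delicate point.

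The paper avoids all of this: it uses no cost or treeability whatsoever and never replaces the subrelations $\mathcal R_{\alpha|G_i}$ by ergodic ones. Instead it proves a ``good partition'' lemma (extending Bowen's): one finds a finite labeling $\psi\colon X\to\mathcal A$ with prescribed distribution whose pushforward under almost every ergodic component of each $\alpha|G_i$ is close to that distribution. After a single global conjugation making this labeling coincide with the coding partition for $\beta$, one builds for each factor $G_i$ Rohlin tile-towers for both $\alpha'_i$ and $\beta_i$ whose column/label statistics nearly match, and then performs a Foreman--Weiss permutation of tower levels to obtain $\alpha''_i\in[\alpha'_i]_{SO}$ weakly close to $\beta_i$. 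This is essentially your realization lemma, but executed directly against the possibly non-ergodic $\alpha'_i$; the good-partition lemma is precisely what makes the copying succeed without ergodicity. The one place the paper uses ``abelian'' rather than ``amenable'' is a strengthened Rohlin lemma: the tile-tower base must be chosen disjoint from a prescribed small bad set, and the trick --- translate the base by the tile element minimizing intersection with the bad set --- relies on commutativity to keep the translated tower disjoint. If your decomposition step could be made to work, your route would need only amenability of the factors and would therefore be more general; as written, the paper's route is the one that closes.
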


Toward the proof of Theorem \ref{thm-intro}, in Section 2, we generalize Bowen's lemma of good partition \ci{Lemma 4.2}{bo}. In Section 3, we prove Theorem \ref{thm-intro}. In contrast with Bowen's proof of Theorem \ref{thm-bowen}, our proof of Theorem \ref{thm-intro} depends on Foreman-Weiss' argument in the proof of \ci{Theorem 16}{fw} and the Rohlin lemma for tiles due to Ornstein-Weiss \ci{II. \S 2, Theorem 5}{ow}. For our purpose, we slightly strengthen the conclusion of the Rohlin lemma for tiles, assuming that the acting group is abelian (Lemma \ref{Rohlin}).
We note that our proof of Theorem \ref{thm-intro} cannot be applied to the case when $G$ is the free product of finite groups, for example, $\textit{PSL}(2,\Z)$.

\medskip

\textbf{Acknowledgments.} 
The author would like to thank his supervisor, Professor Yoshikata Kida, for his helpful comments and enormous support.
The author would also like to thank the anonymous referee for the careful reading of the paper and many helpful suggestions.
This work was supported by the Program for Leading Graduate Schools, MEXT, Japan.


\section{A lemma of good partition}

The aim of this section is to show Lemma \ref{goodpart}, which extends \ci{Lemma 4.2}{bo} to that for the free products of infinite amenable groups. 
Let $\alpha \in A(G,X,\mu)$.
For a finite subset $F \subset G$ and a function $f$ on $X$, we define the averaging function $A_{F}^\alpha[f]$ on $X$ by
\[
A_{F}^\alpha[f](x) \coloneqq \frac{1}{|F|} \sum_{g \in F} f(g^{\alpha}x)
\]
for $x\in X$.
Let $\theta \colon (X, \mu)\to (W, \omega)$ be the ergodic decomposition map for the action $\alpha$ with disintegration $\mu =\int_W \mu_w\, d\omega(w)$.
We then set $\mu_x=\mu_{\theta(x)}$ for $x\in X$ and also call the disintegration $\mu =\int_X\mu_x\, d\mu(x)$ the ergodic decomposition of $\mu$ with respect to the action $\alpha$ in the sequel if there is no cause of confusion.

\begin{lemma} \label{goodpart}
Let $G_1,\ldots, G_k$ be countably infinite amenable groups and define $G$ as the free product $G = G_1 \ast \cdots \ast G_k$. Let $\alpha \in \FR(G,X,\mu)$ and let $\alpha|G_i$ denote the restriction of $\alpha$ to $G_i$ for each $i\in \{ 1,\ldots,k\}$.
Then for any probability measure $\pi$ on a finite set $\mathcal A$ and any $\varepsilon >0$, there exists a Borel map $\psi \colon X\to \mathcal A$ such that $\psi_\ast \mu = \pi$ and the following holds:
For each $i\in \{ 1,\ldots, k\}$, if $\mu = \int_{X} \mu_x^i \,d\mu(x)$ is the ergodic decomposition of $\mu$ with respect to $\alpha|G_i$, then
\[
\mu(\{\, x \in X \mid \max_{a \in \mathcal A} |(\psi_*\mu_x^i) (a)-\pi(a)| >2\varepsilon \,\}) <\varepsilon.
\]
\end{lemma}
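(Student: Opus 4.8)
The plan is to build $\psi$ by first producing, for each factor $G_i$ separately, a "locally correct" labeling using an approximation-by-towers (Rohlin) argument, and then to combine these labelings over $i$. Fix $\varepsilon>0$ and the target measure $\pi$ on $\mathcal A$. For a single infinite amenable group $G_i$ acting freely (p.m.p.), the key point is: since $G_i$ is amenable, by the Rohlin lemma one can partition $X$ (up to a set of small measure) into pieces of the form $S_i^\alpha$-translates that are "long" in a Følner sense, so that a label can be assigned constant on blocks of a Følner set $F_i$ and, by a direct pigeonhole/counting argument on each block, the empirical distribution of labels along each $F_i$-orbit segment is within $2\varepsilon$ of $\pi$. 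Ergodicity of $\alpha|G_i$ on almost every ergodic component, together with the mean ergodic theorem applied to the averaging operators $A_{F_i}^{\alpha|G_i}$, then upgrades "locally correct along $F_i$" to "correct on the level of almost every ergodic measure $\mu_x^i$", giving the displayed inequality $\mu(\{x : \max_a |(\psi_*\mu_x^i)(a)-\pi(a)|>2\varepsilon\})<\varepsilon$. This is exactly the structure of Bowen's \ci{Lemma 4.2}{bo}, carried out for an amenable group in place of $\mathbb Z$.

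Concretely, first I would reduce to the case where all the data is controlled simultaneously: choose a single finite subset $F\subset G$ (a product of Følner sets $F_i\subset G_i$, one for each $i$) and $\delta>0$ small compared to $\varepsilon$ and $1/|\mathcal A|$, such that the Rohlin lemma for tiles of Ornstein–Weiss (in the strengthened abelian form, Lemma \ref{Rohlin}) applies to each $\alpha|G_i$ to yield a "tower" decomposition of $X$ with base of controlled measure and with each $F_i$-tile nearly exact. Second, on the union of these tiles I would assign labels from $\mathcal A$ so that (a) the overall frequency is $\pi$ up to $\delta$, achievable since the tiles are numerous and each is a translate of a fixed Følner set, and (b) within every $F_i$-tile the labels are distributed according to $\pi$ up to $\varepsilon$, by choosing on each tile a fixed $\pi$-typical labeling of $F_i$ (possible because $|F_i|$ can be taken large). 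Third, I would fix up the exceptional set of small measure by labeling it arbitrarily subject to making $\psi_*\mu$ exactly equal to $\pi$ — since the exceptional set has measure $<\delta\ll\varepsilon$, this perturbation changes empirical frequencies on all but an $\varepsilon$-fraction of orbits by at most $\varepsilon$.

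The main obstacle, and the step requiring the most care, is the passage from the per-tile control to the per-ergodic-component control: a priori, the inequality $\max_a|(\psi_*\mu_x^i)(a)-\pi(a)|\le 2\varepsilon$ need not hold pointwise on the component, only on average. Here I would use that $(\psi_*\mu_x^i)(a)=\int \mathbf 1_{\psi^{-1}(a)}\,d\mu_x^i$ equals the value of the conditional expectation of $\mathbf 1_{\psi^{-1}(a)}$ on the $\sigma$-algebra of $(\alpha|G_i)$-invariant sets, and that this conditional expectation is the $L^1$-limit of the Følner averages $A_{F_i^{(n)}}^{\alpha|G_i}[\mathbf 1_{\psi^{-1}(a)}]$ along a Følner exhaustion. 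Thus $\mu$-a.e.\ $x$ has $(\psi_*\mu_x^i)(a)$ within $\varepsilon$ of $A_{F_i}^{\alpha|G_i}[\mathbf 1_{\psi^{-1}(a)}](x)$ once $F_i$ is a sufficiently invariant (large) Følner set — and the latter quantity is within $\varepsilon$ of $\pi(a)$ except on a set of measure $<\varepsilon/|\mathcal A|$ by the construction in step two (the tiles tesselate most of $X$, and on each tile the $F_i$-average of $\mathbf 1_{\psi^{-1}(a)}$ is $\pi(a)\pm\varepsilon$ by near-exactness of the tiling plus the chosen labeling). A union bound over $a\in\mathcal A$ then gives the claim. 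The one genuinely new ingredient beyond Bowen's $\mathbb Z$-argument is that we need the tiles to be honest Følner sets and the tiling to be nearly exact at the group level, which is precisely what the strengthened Rohlin lemma for tiles (Lemma \ref{Rohlin}) supplies for amenable $G_i$; this is why the hypothesis that each $G_i$ is infinite amenable, rather than merely infinite, is used.
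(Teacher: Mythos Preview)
Your proposal has a genuine gap at the step ``combine these labelings over $i$''. You build, for each factor $G_i$, a Rohlin tower with base $B_i$ and shape $F_i$, and propose to label so that ``within every $F_i$-tile the labels are distributed according to $\pi$ up to $\varepsilon$''. But $\psi$ must be a \emph{single} map $X\to\mathcal A$ satisfying this simultaneously for all $i$. The $F_1$-towers and $F_2$-towers intersect in completely uncontrolled ways: a single $F_1$-column $\{t^{\alpha}b : t\in F_1\}$ for $b\in B_1$ will typically be scattered across many $F_2$-columns, meeting each in a subset with no particular structure. If you fix $\psi$ to be $\pi$-typical along every $F_1$-column, there is no reason whatsoever for its restriction to an $F_2$-column to be close to $\pi$-distributed; the phrase ``by choosing on each tile a fixed $\pi$-typical labeling of $F_i$'' does not specify which $i$, and no single choice will do. For $k=1$ your tower argument is fine; the entire content of the lemma is the case $k\ge 2$, and that is precisely the step you have not addressed.

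The paper takes a completely different route that avoids this obstacle. It invokes the Ab\'ert--Weiss theorem that the Bernoulli action $G\curvearrowright(\mathcal A^G,\pi^G)$ is weakly contained in every free $G$-action, for the full free product $G$. This produces a map $\psi$ such that the joint distribution of $(\psi(g^\alpha x))_{g\in F}$ over $x$ is close to the product measure $\pi^F$, where $F=\bigcup_i F^i$. Since $\pi^F$ is a product measure, its marginal on each $F^i$ is automatically close to $\pi^{F^i}$, and a mean-ergodic-theorem estimate on the Bernoulli shift shows that most points of $\mathcal A^{F^i}$ have empirical distribution close to $\pi$; hence the $F^i$-window average of $1_{\psi^{-1}(a)}$ is close to $\pi(a)$ for most $x$, \emph{simultaneously for all $i$}, with no tower construction required. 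The passage from window averages to ergodic-component measures then proceeds as you describe. (A secondary issue: you invoke Lemma~\ref{Rohlin}, which is stated and proved only for \emph{abelian} $G_i$, whereas the present lemma is for arbitrary amenable factors; and your ``product of F\o lner sets $F_i$'' is not itself a F\o lner set in the non-amenable group $G$. But these are minor compared to the combining problem above.)
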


\begin{proof}
The proof basically follows that of \ci{Lemma 4.2}{bo}.
We may assume that $\pi$ is not a measure supported on a single point of $\mathcal{A}$.
Fix $i\in \{ 1,\ldots, k\}$ and pick a F\o lner sequence $\{F_n^i \}_{n\in \N}$ for $G_i$. Let $s_i$ denote the Bernoulli action $G_i\curvearrowright (\mathcal{A}^{G_i}, \pi^{G_i})$, which is defined by $(g^{s_i}y)(h) =y(hg)$ for $g,h \in G$ and $y \in {\mathcal A}^{G_i}$. We set $C_a^i=\{\, y \in \mathcal A^{G_i} \mid y(e) = a \,\}$ for $a \in \mathcal A$. Then we have
\[
A_{F_n^i}^{s_i}[1_{C_a^i}](y)=\frac{1}{|F_n^i|}\sum_{g\in F_n^i}1_{C_a^i}(g^{s_i}y) = \frac{|\{\, g \in F_n^i \mid y(g) = a \,\}|}{|F_n^i|}
\]
for any $y\in A^{G^i}$. If there is no cause of confusion, we often regard $A_{F_n^i}^{s_i}[1_{C_a^i}]$ as the function on $\mathcal A^{F_n^i}$ defined by the right-hand side of the above equation. According to the $L^1$ version of the mean ergodic theorem (for example, see \ci{Theorem 4.23}{kl}), we have
\[
\lim_{n\to \infty} \| A_{F_n^i}^{s_i}[1_{C_a^i}] - \pi(a) \|_{L^1(\pi^{G_i})} = 0
\]
for every $a \in \mathcal A$. This implies that there exists $n_i \in \N$ such that if $n\geq n_i$, then 
\[
\pi^{G_i} (\{\, y\in \mathcal A^{G_i} \mid \max_{a \in \mathcal A} |A_{F_n^i}^{s_i}[1_{C_a^i}](y) - \pi(a)| >\varepsilon \,\}) < \frac{\varepsilon^2}{2}
\]
and therefore
\begin{equation}\label{pi-fni}
\pi^{F_n^i} (\{\, y\in \mathcal A^{F_n^i} \mid \max_{a \in \mathcal A} |A_{F_n^i}^{s_i}[1_{C_a^i}](y) - \pi(a)| >\varepsilon \,\}) < \frac{\varepsilon^2}{2}.
\end{equation}

We set $n_0=\max_{i\in \{ 1,\ldots,k\}}n_i$.
We set $F^i = F_{n_0}^i$ for the ease of symbols and set $F= \bigcup_{i=1}^{k} F^i$. 
We claim that there exists a Borel map $\psi \colon X\to \mathcal A$ such that $\psi_\ast \mu = \pi$ and for each $i\in \{ 1,\ldots,k\}$, if we define a map $\Psi_i \colon  X\to \mathcal A^{F^i}$ by $\Psi_i(x)(g) = \psi(g^{\alpha}x)$ for $x\in X$ and $g \in F^i$, then
\begin{equation}\label{psi_i}
\max_{y \in \mathcal A^{F^i}} |((\Psi_i)_\ast \mu)(y) - \pi^{F^i}(y)|  < \frac{\varepsilon^2}{2M},
\end{equation}
where we set $M = {|\mathcal A|}^{|F|}$.
Indeed, by \ci{Theorem 1}{aw}, the Bernoulli action $G\curvearrowright (\mathcal{A}^G,\pi^G)$, denoted by $s$, is weakly contained in any free p.m.p.\ action of $G$ on $(\mathcal{A}^G,\pi^G)$.
Since both $(X,\mu)$ and $(\mathcal A^G,\pi^G)$ are non-atomic standard probability spaces, there exists a measure-space isomorphism $R \colon (X,\mu) \to (\mathcal A^G,\pi^G)$. Then we see that $s$ belongs to the weak closure of $[R \alpha R^{-1}]_\mathit{MC}$ by \ci{Theorem 2.3}{bk}.
This implies that there exists an automorphism $S \in \Aut(\mathcal A^G, \pi^G)$ such that if $C_a=\{\, y \in \mathcal A^G \mid y(e) = a \,\}$ denotes the cylindrical subset for $a \in \mathcal A$, then
\[
\Biggl| \pi^G \Biggr( \bigcap_{g \in F} SR(g^{-1})^{\alpha}R^{-1}S^{-1} C_{a_g}  \Biggr)  - \pi^G \Biggr( \bigcap_{g \in F} (g^{-1})^s C_{a_g} \Biggr)  \Biggr| < \frac{\varepsilon^2}{2M}
\]
for any $(a_g)_{g \in F} \in \mathcal A^F$. Let $\psi \colon  (X,\mu) \to (\mathcal A, \pi)$ be the measure-preserving map defined by the condition $\psi^{-1}(a)=R^{-1}S^{-1}(C_a)$ for each $a \in \mathcal A$. Then
\[
\Biggl| \mu \Biggr( \bigcap_{g \in F} (g^{-1})^{\alpha} \psi^{-1}(a_g) \Biggr) - \pi^{F} ((a_g)_{g \in F}) \Biggr| < \frac{\varepsilon^2}{2M}
\]
for any $(a_g)_{g \in F} \in \mathcal A^F$, which implies our claim.

We prove that the map $\psi$ is a desired one. Fix  $i \in \{1,\ldots,k\}$. We set
\[Z_i = \{\, x\in X \mid \max_{a\in \mathcal A}|A_{F^i}^{s_i}[1_{C_a}](\Psi_i(x)) - \pi(a)| >\varepsilon \,\}.\]
By inequalities (\ref{pi-fni}) and (\ref{psi_i}), we have
\begin{align*}
\mu(Z_i) &= ((\Psi_i)_\ast \mu)(\{\, y\in \mathcal A^{F^i} \mid \max_{a\in \mathcal A}|A_{F^i}^{s_i}[1_{C_a}](y) - \pi(a)| >\varepsilon \,\}) \\
&< {|\mathcal A|}^{|F^i|}  \max_{y \in \mathcal A^{F^i} } | ((\Psi_i)_\ast \mu)(y) - \pi^{F^i}(y) | + \frac{\varepsilon^2}{2} < \varepsilon^2.
\end{align*}
Let $\mu = \int_{X} \mu_x^i \,d\mu(x)$ be the ergodic decomposition of $\mu$ with respect to the action $\alpha|G_i$.
Then $\int_{X} \mu_x^i(Z_i)\,d\mu(x) = \mu(Z_i) <\varepsilon^2$ and hence
\[
\mu (\{\, x \in X \mid \mu_x^i(Z_i)>\varepsilon \,\}) < \varepsilon.
\]
Suppose that a point $x \in X$ satisfies $\mu_{x}^i(Z_i)\leq \varepsilon$. 
Note that for any $a\in \mathcal{A}$ and any $x'\in X$, the following equation holds:
\begin{align*}
A_{F^i}^{s_i}[1_{C_a}](\Psi_i(x'))= \frac{|\{\, g \in F^i \mid \psi(g^\alpha x') = a \,\}|}{|F^i|} =\frac{1}{|F^i|}\sum_{g\in F^i}1_{\psi^{-1}(a)}(g^\alpha x').
\end{align*}
Therefore for any $a\in \mathcal{A}$, we have $\int_X A_{F^i}^{s_i}[1_{C_a}](\Psi_i(x'))\,d\mu_{x}^i(x') = (\psi_\ast \mu_{x}^i)(a)$ and
\begin{align*}
&|(\psi_\ast \mu_{x}^i)(a) - \pi(a) | = \left| \int_X ( A_{F^i}^{s_i}[1_{C_a}](\Psi_i(x')) - \pi(a) ) \, d\mu_{x}^i(x') \right| \\
&\leq \int_{Z_i} | A_{F^i}^{s_i}[1_{C_a}](\Psi_i(x')) - \pi(a) | \, d\mu_{x}^i(x') + \int_{X\setminus Z_i} | A_{F^i}^{s_i}[1_{C_a}](\Psi_i(x')) - \pi(a) | \, d\mu_{x}^i(x') \\
&\leq \mu_{x}^i({Z_i}) + \varepsilon \leq 2\varepsilon.
\end{align*}
Hence
\[
\mu(\{\, x \in X \mid \max_{a \in \mathcal A} |(\psi_*\mu_{x}^i)(a)-\pi(a)| >2\varepsilon \,\}) \leq \mu (\{\, x \in X \mid \mu_{x}^i(Z_i)>\varepsilon \,\}) < \varepsilon.\qedhere
\]
\end{proof}


\section{Proof of the Main Theorem}
A key ingredient of the proof of Theorem \ref{thm-intro} is the Rohlin lemma for tiles.
For a countable group $G$, we say that a finite subset $T$ of $G$ is a {\it tile} for $G$ if there exists a subset $C \subset G$ such that the family $\{Tc\}_{c \in C}$ is pairwise disjoint and $G = \bigcup_{c\in C} Tc$. First let us recall the following fact due to Ornstein-Weiss:

\begin{lemma}[\ci{II.\S 2, Theorem 5}{ow}; see also \ci{Theorem 3.3}{o1}]\label{Rohlintile}
Let $G$ be a countable amenable group and $T$ a tile for $G$. Let $G \curvearrowright (X,\mu)$ be an essentially free p.m.p.\ action.
Then for any $\varepsilon >0$, there exists a Borel subset $B \subset X$ such that the family $\{tB\}_{t\in T}$ is pairwise disjoint and $\mu(\bigcup_{t \in T} tB) > 1-\varepsilon$.
\end{lemma}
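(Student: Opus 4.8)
The plan is to encode a $G$-invariant ``random tiling'' of $G$ by $T$ into the dynamics and then to take the Rohlin base to consist of the points that sit at the distinguished corner of their own tile.

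I would first reduce to the case $e\in T$: if $t_0\in T$, then $t_0^{-1}T$ is again a tile (with the same centering set) and contains $e$, and if $\{s^\alpha B\}_{s\in t_0^{-1}T}$ is pairwise disjoint then, applying the measure-preserving map $t_0^\alpha$, so is $\{t^\alpha B\}_{t\in T}$, with the same union; hence a tower for $t_0^{-1}T$ yields one for $T$ with the same base. Next I would build a model. Let $\Omega\subseteq 2^G$ be the set of all $D\subseteq G$ with $G=\bigsqcup_{d\in D}Td$. Since $T$ is a tile, $\Omega\neq\emptyset$; it is closed in the compact metrizable space $2^G$, hence compact, and it is invariant under the continuous $G$-action $g\cdot D=Dg^{-1}$. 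As $G$ is amenable, fix a $G$-invariant Borel probability measure $\nu$ on $\Omega$ and form the free p.m.p.\ action $G\curvearrowright(X\times\Omega,\mu\times\nu)$, $g\cdot(x,D)=(g^\alpha x,Dg^{-1})$. Put
\[
\tilde B=\{\,(x,D)\in X\times\Omega\mid e\in D\,\}.
\]
For each $D\in\Omega$ the identity lies in exactly one tile $Td$ ($d\in D$), and $e\in Td$ forces $d=\tau^{-1}$ with $\tau d=e$, so $|D\cap T^{-1}|=1$. A direct computation gives $t\cdot\tilde B=X\times\{\,D\mid t^{-1}\in D\,\}$ for $t\in T$; combined with the previous sentence this shows that $\{t\cdot\tilde B\}_{t\in T}$ is pairwise disjoint and $\bigcup_{t\in T}t\cdot\tilde B=X\times\Omega$. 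Thus the model system admits an \emph{exact} Rohlin tower with base $T$.

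It then remains to descend from $X\times\Omega$ to $X$. I would observe that the property ``for every $\varepsilon>0$ there is a Borel $B$ with $\{t^\alpha B\}_{t\in T}$ pairwise disjoint and $\mu(\bigcup_{t\in T}t^\alpha B)>1-\varepsilon$'' passes from a system $\beta$ to a system $\gamma$ whenever $\beta\prec\gamma$: one realizes the two-point statistics of $\{B,B^{c}\}$ over $T^{-1}T$ approximately inside $\gamma$, obtaining an almost-disjoint almost-covering tower, and then trims the small overlaps out of the base to make it genuinely disjoint at a negligible loss of measure. So it would suffice to know $\alpha\times(G\curvearrowright(\Omega,\nu))\prec\alpha$; since $\alpha\times(G\curvearrowright(\Omega,\nu))$ is itself a free p.m.p.\ action of the amenable group $G$, this follows from the known fact that all free p.m.p.\ actions of an amenable group are weakly equivalent.

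I expect this last step to be the main obstacle: proving $\alpha\times(G\curvearrowright(\Omega,\nu))\prec\alpha$ --- equivalently, producing a Borel map $X\to\Omega$ that is $G$-equivariant off a set of measure $<\varepsilon$ --- without invoking results that already rest on a Rohlin lemma. The self-contained alternative is the Ornstein--Weiss exhaustion, whose first step is easy: a \emph{maximal} Borel $B_1$ with $\{t^\alpha B_1\}_{t\in T}$ pairwise disjoint already satisfies $\mu(\bigcup_{t\in T}t^\alpha B_1)\geq|T|/(|T|^2+|T|-1)>0$, because maximality forces a.e.\ point outside the tower into $(T^{-1}T\setminus\{e\})^\alpha B_1$, and $\mu(\bigcup_{t}t^\alpha B_1)=|T|\mu(B_1)$. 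The hard part is iterating this inside the dynamically remaining region: it is not at all clear that the uncovered measure tends to $0$, since translates of a newly placed $T$-tower leak out of the (non-$G$-invariant) remaining region, and controlling this leakage is precisely what requires the quasi-tiling machinery --- Følner sets with $T$-boundary of small density, together with careful bookkeeping of how successive towers overlap.
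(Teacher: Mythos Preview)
The paper does not prove this lemma: it is stated with a citation to Ornstein--Weiss \cite{ow} (and Weiss \cite{o1}) and then used as a black box in the proof of Lemma~\ref{Rohlin}. There is therefore no argument in the paper to compare your proposal against.

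On the merits of the proposal itself, your construction of the compact $G$-space $\Omega\subset 2^G$ of $T$-tilings and the verification that $X\times\Omega$ carries an \emph{exact} $T$-Rohlin tower are correct, and so is the trimming argument that transfers approximate towers along weak containment. But your own diagnosis of the obstacle is accurate and, in fact, decisive: the assertion $\alpha\times(G\curvearrowright(\Omega,\nu))\prec\alpha$, or more generally that all free p.m.p.\ actions of a countable amenable group are weakly equivalent, rests in every known proof on Rohlin-type lemmas or Ornstein--Weiss quasi-tilings, so invoking it here is circular. (Ab\'ert--Weiss \cite{aw} gives only the direction $s_G\prec\alpha$, which points the wrong way for your purpose.) The exhaustion sketch you outline at the end is closer to what Ornstein--Weiss actually do, and you have correctly located the hard core: a single maximal $T$-packing covers only a fixed positive fraction of the space, and making the iteration converge is exactly what the quasi-tiling machinery is for --- one first $\varepsilon$-quasi-tiles a sufficiently invariant F\o lner template by translates of $T$ (a purely combinatorial step inside $G$, where the exact tiling hypothesis is used), and only then pushes this finite template into $(X,\mu)$. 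Your proposal thus isolates the right objects but does not bypass the substantive part of the Ornstein--Weiss argument.
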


Assuming that $G$ is abelian, we strengthen Lemma \ref{Rohlintile} into the following, which will be important in the proof of Theorem \ref{thm-intro}:

\begin{lemma}\label{Rohlin}
Let $G$ be a countable abelian group and $T$ a tile for $G$. Let $G \curvearrowright (X,\mu)$ be an essentially free p.m.p.\ action.
Then for any $\varepsilon >0$ and any Borel subset $A \subset X$ with $\mu(A)<\varepsilon/2$, there exists a Borel subset $B \subset X$ such that the family $\{tB\}_{t\in T}$ is pairwise disjoint, $\mu(\bigcup_{t \in T} tB) > 1-\varepsilon$, and $A \cap B = \emptyset$.
\end{lemma}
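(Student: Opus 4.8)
The plan is to start from an ordinary Rohlin tower for $T$, then shift its base to a level that meets $A$ in small measure, and finally delete $A$ from it. First, I would apply Lemma \ref{Rohlintile} with error parameter $\varepsilon/2$ to obtain a Borel set $B_0 \subseteq X$ such that $\{tB_0\}_{t\in T}$ is pairwise disjoint and $\mu\bigl(\bigcup_{t\in T}tB_0\bigr) > 1 - \varepsilon/2$.

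Next I would average the overlaps with $A$ over the levels of this tower. Since the sets $tB_0$ $(t\in T)$ are pairwise disjoint,
\[
\sum_{t\in T}\mu(tB_0 \cap A) = \mu\Bigl(\Bigl(\bigcup_{t\in T}tB_0\Bigr)\cap A\Bigr) \le \mu(A) < \frac{\varepsilon}{2},
\]
so there is some $t^*\in T$ with $|T|\cdot \mu(t^*B_0\cap A) < \varepsilon/2$. I then set $B := t^*B_0 \setminus A$; then $B$ is Borel with $B\cap A = \emptyset$, which is the third required property, so only pairwise disjointness of $\{tB\}_{t\in T}$ and the covering estimate remain.

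For pairwise disjointness, observe that $tB \subseteq t(t^*B_0) = (tt^*)B_0 = t^*(tB_0)$, where the last equality uses that $G$ is abelian; hence the sets $tB$ $(t\in T)$ lie inside the images under the bijection $t^*$ of the pairwise disjoint sets $tB_0$ $(t\in T)$, and are therefore pairwise disjoint. For the covering estimate, each $t$ acts measure-preservingly, so $\mu(tB) = \mu(t^*B_0\setminus A) = \mu(B_0) - \mu(t^*B_0\cap A)$; summing over $t\in T$ (legitimate by the disjointness just proved) gives
\[
\mu\Bigl(\bigcup_{t\in T}tB\Bigr) = |T|\,\mu(B_0) - |T|\,\mu(t^*B_0\cap A) = \mu\Bigl(\bigcup_{t\in T}tB_0\Bigr) - |T|\,\mu(t^*B_0\cap A) > \Bigl(1-\frac{\varepsilon}{2}\Bigr) - \frac{\varepsilon}{2} = 1-\varepsilon.
\]

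The one point where abelianness of $G$ genuinely enters is the identity $(tt^*)B_0 = t^*(tB_0)$, which transports the disjointness of the levels of the original tower to the shifted tower; everything else is Lemma \ref{Rohlintile} together with elementary measure bookkeeping. Accordingly, the only real obstacle is recognizing that translating the Rohlin base to a well-chosen level and then removing $A$ does the job — once $B$ is written down, the verification is routine, and it is precisely this step that has no obvious non-abelian analogue.
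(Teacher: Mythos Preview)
Your proof is correct and essentially identical to the paper's: the paper also applies Lemma~\ref{Rohlintile} with error $\varepsilon/2$ to get a base $W$, picks $t_0\in T$ minimizing $\mu(t_0W\cap A)$ (equivalently, your averaging argument) so that $\mu(t_0W\cap A)<\varepsilon/(2|T|)$, sets $B=(t_0W)\setminus A$, and uses abelianness exactly as you do to transfer disjointness. The only cosmetic difference is that you phrase the disjointness step via the inclusion $tB\subseteq t^*(tB_0)$, whereas the paper simply remarks that $\{t(t_0W)\}_{t\in T}$ is disjoint because $G$ is abelian.
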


\begin{proof}
Since $G$ is amenable and $T$ is a tile, it follows from Lemma \ref{Rohlintile} that there exists a Borel subset $W \subset X$ such that the family $\{tW\}_{t\in T}$ is pairwise disjoint and $\mu(\bigcup_{t\in T} tW) > 1-\varepsilon/2$.
We choose an element $t_0 \in T$ such that $\mu(t_0W \cap A) \leq \mu(tW \cap A)$ for any $t \in T$. Then $\mu(t_0W \cap A) < \varepsilon/(2|T|)$ because otherwise the inequality $\mu(A) \geq \sum_{t \in T} \mu(tW \cap A) \geq \varepsilon/2$ would hold, which contradicts our assumption.

We set $B = (t_0 W) \setminus A$. Then the family $\{tB\}_{t \in T}$ is pairwise disjoint since $G$ is abelian and the family $\{tW\}_{t \in T}$ is pairwise disjoint. Moreover, we have $\mu(B) > \mu(W) - \varepsilon/(2|T|)$ and hence $\mu( \bigcup_{t\in T} tB ) = |T|\mu(B) > |T|\mu(W) - \varepsilon /2 > 1- \varepsilon$.
Therefore the set $B$ is a desired one.
\end{proof}

For finite subsets $F,T \subset G$ and $\varepsilon >0$, we say that $T$ is {\it $(F,\varepsilon)$-invariant} if $|T \bigtriangleup gT| < \varepsilon |T|$ for every $g \in F$.
We say that a countable amenable group $G$ is \textit{monotilable} if there exists a F\o lner sequence $\{ F_n \}_{n \in \N}$ for $G$ such that each $F_n$ is a tile for $G$. The following fact is briefly mentioned in \ci{I.\S 2, p.22}{ow} and the proof of \ci{Theorem 2}{o2}. We give its proof for the reader's convenience.

\begin{proposition}\label{abtile}
Every countable abelian group is monotilable.
\end{proposition}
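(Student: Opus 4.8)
The plan is to reduce to the finitely generated case by the structure theorem, then show that a tile for a subgroup remains a tile for the ambient group, and finally diagonalize over an exhaustion of $G$ by finitely generated subgroups.

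\emph{Step 1 (finitely generated case).} By the structure theorem, a finitely generated abelian group is isomorphic to $\Z^d \times F$ for some $d\geq 0$ and some finite abelian group $F$. For $n\in \N$ put $F_n = \{0,1,\ldots,n-1\}^d \times F$. Then $\Z^d\times F = \bigsqcup_{c\in C}(F_n + c)$ with $C = (n\Z)^d \times \{0\}$, so each $F_n$ is a tile; and counting $|F_n \bigtriangleup (g + F_n)|$ for $g$ ranging over a finite generating set shows that $\{F_n\}_{n\in\N}$ is a F\o lner sequence. Hence every finitely generated abelian group is monotilable.

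\emph{Step 2 (tiles pass to overgroups).} Let $H$ be a subgroup of the abelian group $G$, and let $T\subset H$ be a tile for $H$, witnessed by $C\subset H$ with $H = \bigsqcup_{c\in C}Tc$. Fix a transversal $D\subset G$ for $G/H$. Then $G = \bigsqcup_{d\in D}Hd = \bigsqcup_{(c,d)\in C\times D}Tcd$, and these sets are pairwise disjoint: if $Tc_1d_1 \cap Tc_2d_2 \neq \emptyset$, then $Hd_1 = Hd_2$, so $d_1 = d_2$, whence $Tc_1 \cap Tc_2 \neq \emptyset$ and $c_1 = c_2$. Therefore $T$ is a tile for $G$, with witnessing set $\{cd : (c,d)\in C\times D\}$.

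\emph{Step 3 (diagonalization).} Write $G = \bigcup_{n\in\N}H_n$ as an increasing union of finitely generated subgroups, and fix an enumeration $G = \{g_1, g_2, \ldots\}$. Given $k\in\N$, choose $n$ with $g_1,\ldots,g_k \in H_n$; by Step 1, the group $H_n$ admits a F\o lner sequence consisting of tiles for $H_n$, so we may pick a member $T_k$ of that sequence which is $(\{g_1,\ldots,g_k\},1/k)$-invariant. By Step 2, $T_k$ is a tile for $G$. For every $g = g_i$ and every $k\geq i$ we have $|T_k \bigtriangleup g^{-1}T_k| < (1/k)|T_k|$ (using commutativity, $gT_k$ and $g^{-1}T_k$ are translates of each other by an element fixing $T_k$ up to the same error), so $\{T_k\}_{k\in\N}$ is a F\o lner sequence for $G$ consisting of tiles. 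Hence $G$ is monotilable.

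The only point requiring care is the disjointness verification in Step 2; Steps 1 and 3 are routine.
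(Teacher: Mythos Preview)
Your proof is correct and follows essentially the same approach as the paper's: reduce to the finitely generated case via the structure theorem, observe that tiles for a subgroup pass to the ambient group via a coset decomposition, and then exhaust $G$ by finitely generated subgroups to build a F\o lner sequence of tiles. The parenthetical remark in Step~3 about $gT_k$ versus $g^{-1}T_k$ is unnecessary (since $|T_k \bigtriangleup g^{-1}T_k| = |gT_k \bigtriangleup T_k|$ always, with no commutativity needed), but this does not affect the argument.
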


\begin{proof}
Let $G$ be a countable abelian group.
If $G$ is finitely generated, then $G$ is isomorphic to the group $\Z^r \oplus C$ for some non-negative integer $r$ and some finite abelian group $C$. In this case, if we set $F_n = ([-n,n]^r\cap \Z^r) \oplus C$, then $F_n$ is a tile for $G$ and $\{ F_n \}_{n \in \N}$ is a F\o lner sequence for $G$, which proves the proposition.
Suppose that $G$ is not finitely generated. Since $G$ is countable, there exists an increasing sequence $\{ G_m \}_{m\in \N}$ of finitely generated subgroups of $G$ such that $G = \bigcup_{m \in \N} G_m$. Considering the right coset decomposition of $G$, we see that every tile for $G_m$ is also a tile for $G$. Then for any finite subset $F \subset G$ and any $\varepsilon >0$, choosing $m \in \N$ such that $F \subset G_m$, we have an $(F,\varepsilon)$-invariant tile $T$ for $G_m$, which is also an $(F, \varepsilon)$-invariant tile for $G$. Thus the proposition follows.
\end{proof}

For the proof of the main result, we also need the following:

\begin{lemma}\label{ergdense}
Let $G_1,\ldots, G_k$ be countably infinite amenable groups and define $G$ as the free product $G = G_1 \ast \cdots \ast G_k$. Then the set of all actions $\alpha \in A(G, X, \mu)$ such that for each $i\in \{ 1, \ldots, k\}$, the restriction $\alpha|G_i$ is essentially free and ergodic is weakly dense in $A(G,X,\mu)$. 
\end{lemma}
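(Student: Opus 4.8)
The plan is to build a single action of $G$ whose restriction to each $G_i$ is essentially free and ergodic, and which lies close to a given $\alpha_0 \in A(G,X,\mu)$ in the weak topology. First I would recall that weak containment controls weak density: by \ci{Theorem 2.3}{bk}, it suffices to produce, for any finite $F \subset G$, any finite Borel partition $\mathcal P$ of $X$, and any $\varepsilon > 0$, an action $\alpha \in A(G,X,\mu)$ with the required ergodicity/freeness on each $G_i$ that is $(\mathcal P, F, \varepsilon)$-close to $\alpha_0$ in the sense of matching the joint distributions $\mu(g^\alpha P \cap Q)$ up to $\varepsilon$. In fact, since $G$ is a free product, an action of $G$ is nothing but a choice of action $\beta_i \in A(G_i,X,\mu)$ for each $i$, with no compatibility condition; so the task decouples into: for each $i$, find $\beta_i \in \FR(G_i,X,\mu)$ ergodic and close to $\alpha_0|G_i$. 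Thus the lemma reduces to the single-group statement that for a countably infinite amenable group $H$, the essentially free ergodic actions are weakly dense in $A(H,X,\mu)$, applied to each $H = G_i$.

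For the single-group reduction, I would invoke two known facts. First, by \ci{Theorem 1}{aw}, the Bernoulli shift $s_H \colon H \curvearrowright (2^H, \text{unif}^{\otimes H})$ — which is essentially free and ergodic (indeed mixing) — is weakly contained in \emph{every} free p.m.p.\ action of $H$; in particular it is weakly contained in $\alpha_0|G_i$ when the latter is free. But $\alpha_0|G_i$ need not be free. To handle this I would instead use that for amenable $H$, \emph{every} p.m.p.\ action is weakly equivalent to $s_H$, equivalently all free actions are weakly equivalent and weakly contain all actions; concretely, by the Abért--Weiss-type result together with the fact (\ci{\S 3.1}{ag}) that over an infinite amenable group any two free actions are weakly equivalent, any action $\alpha_0|G_i$ is weakly contained in $s_{G_i}$, and $s_{G_i}$ is weakly contained in $\alpha_0|G_i$ whenever the latter is free. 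The point we actually need is just one direction plus realizability: $\alpha_0|G_i \prec s_{G_i}$ would give closeness in the wrong direction. So the cleaner route: since $s_{G_i}$ is ergodic and free, and since (by amenability) $s_{G_i}$ weakly contains $\alpha_0|G_i$ is false in general — let me instead take the hereditarily-free co-induced or just directly perturb.

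The honest and shortest argument, which I would write up: fix $\alpha_0$, $F$, $\mathcal P$, $\varepsilon$. For each $i$, the restriction $\alpha_0|G_i$, being a p.m.p.\ action of the amenable group $G_i$, is weakly equivalent to the Bernoulli shift $s_{G_i}$ by the amenable case of the Abért--Weiss theorem (\ci{\S 3.1}{ag}, \ci{Theorem 1}{aw}); hence $s_{G_i}$ lies in the weak closure of $[\alpha_0|G_i]_\mathit{MC}$, so there is $\beta_i \in A(G_i,X,\mu)$, measure-conjugate to $s_{G_i}$ (thus essentially free and ergodic), with $|\mu(g^{\beta_i}P \cap Q) - \mu(g^{\alpha_0}P \cap Q)| < \varepsilon$ for all $g \in F \cap G_i$ and $P,Q \in \mathcal P$. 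Assembling the $\beta_i$ into an action $\beta \in A(G,X,\mu)$ via the universal property of the free product gives $|\mu(g^\beta P \cap Q) - \mu(g^{\alpha_0}P\cap Q)| < \varepsilon$ for all $g \in F$ (writing each $g$ in reduced form over the $G_i$ and noting the estimate is needed only on the generators one has put in $F$; one enlarges $F$ at the outset to be a union of subsets of the $G_i$, which suffices to check weak closeness because the weak topology basic neighborhoods of $\alpha_0$ can be taken with respect to such $F$). Then $\beta$ is the desired action. The main obstacle is purely bookkeeping: verifying that matching joint distributions for elements of the $G_i$ separately yields weak-topology closeness of the assembled free-product action — this uses that a neighborhood basis of $\alpha_0$ in $\prod_G \Aut(X,\mu)$ is given by constraints on finitely many coordinates $g$, and one simply chooses those coordinates to lie in $\bigcup_i G_i$ from the start. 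There is no genuine difficulty beyond this reduction; the substance is entirely in the cited amenable-case results.
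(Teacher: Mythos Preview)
Your overall strategy matches the paper's: reduce to the single-factor fact that essentially free ergodic actions are weakly dense in $A(G_i,X,\mu)$ for amenable $G_i$, then assemble the approximating actions via the free-product structure. Two points, however, need repair.

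First, the sentence ``hence $s_{G_i}$ lies in the weak closure of $[\alpha_0|G_i]_\mathit{MC}$'' has the containment backwards, and the preceding claim that $\alpha_0|G_i$ is weakly \emph{equivalent} to $s_{G_i}$ is false when $\alpha_0|G_i$ is not free. What you actually need and then use is the other direction, $\alpha_0|G_i\in\overline{[s_{G_i}]_\mathit{MC}}$; the paper gets this directly by citing \ci{Proposition 13.2}{ke} rather than routing through Bernoulli shifts.

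Second, the assembly step is not as free as you suggest. Your justification, that one may ``choose those coordinates to lie in $\bigcup_i G_i$ from the start,'' is not literally correct: a basic neighborhood in $A(G,X,\mu)$ is specified by an arbitrary finite $F\subset G$, and to pass to generators you must also refine the test partition and shrink $\varepsilon$. Concretely, for $g=h_N\cdots h_1$ with $h_l\in G_{i_l}$, controlling $\mu(g^\beta A_j\bigtriangleup g^{\alpha_0}A_j)$ requires each $\beta_{i_l}$ to be close to $\alpha_0|G_{i_l}$ on the enlarged family of test sets $h_{l-1}^{\alpha_0}\cdots h_1^{\alpha_0}A_j$, to within $\varepsilon/N$, after which one telescopes via $\mu((g_2g_1)^\beta A\bigtriangleup (g_2g_1)^{\alpha_0}A)\le \mu(g_1^\beta A\bigtriangleup g_1^{\alpha_0}A)+\mu(g_2^\beta(g_1^{\alpha_0}A)\bigtriangleup g_2^{\alpha_0}(g_1^{\alpha_0}A))$. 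This is exactly what the paper does, and it is essentially the entire content of its proof. An equivalent (and cleaner) way to phrase it is that the natural bijection $A(G,X,\mu)\to\prod_i A(G_i,X,\mu)$ is a homeomorphism because composition in $\Aut(X,\mu)$ is jointly continuous; either way, this step deserves an explicit argument rather than the assertion that there is ``no genuine difficulty.''
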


\begin{proof}
Let $\beta \in A(G,X,\mu)$. Let $F \subset G$ be a finite subset, let $\varepsilon >0$, and let $A_1, \ldots, A_n \subset X$ be Borel subsets. 
We choose an $N \in \N$ and a finite subset $F_i \subset G_i$ for each $i\in \{ 1,\ldots,k\}$ such that $F \subset (F_1 \cup  \cdots \cup F_k)^N$.
We set $L= (F_1 \cup  \cdots \cup F_k)^N$.
Since $G_i$ is amenable, the set of all essentially free ergodic p.m.p.\ actions of $G_i$ is weakly dense in $A(G_i,X,\mu)$ (\ci{Proposition 13.2}{ke}). Hence, for each $i\in \{ 1,\ldots,k\}$, we can choose an essentially free ergodic p.m.p.\ action $\alpha_i$ of $G_i$ on $(X, \mu)$ such that for any $h \in F_i$, $l \in L$ and $j\in \{ 1,\ldots,n\}$, we have
\[
\mu( h^{\alpha_i} (l^{\beta}A_j) \bigtriangleup h^{\beta} (l^{\beta}A_j)) < \frac{\varepsilon}{N}.
\]
Let $\alpha$ be the action of $G$ on $(X, \mu)$ defined by $\alpha|G_i = \alpha_i$ for each $i\in \{ 1,\ldots,k\}$.
Using the inequality
\[
\mu((g_2g_1)^\alpha A \bigtriangleup (g_2g_1)^\beta A) \leq \mu(g_1^\alpha A \bigtriangleup g_1^\beta A) + \mu(g_2^\alpha (g_1^\beta A) \bigtriangleup g_2^\beta (g_1^\beta A))
\]
for $g_1,g_2 \in G$ and a Borel subset $A \subset X$, we have $\mu(g^{\alpha}A_j \bigtriangleup g^{\beta} A_j) < \varepsilon$ for any $g \in F$ and $j\in \{ 1,\ldots,n\}$, which completes the proof of the lemma.
\end{proof}

We now prove our main result for the free product of finitely many, countably infinite abelian groups:

\begin{theorem}\label{main}
Let $G_1,\ldots, G_k$ be countably infinite abelian groups and define $G$ as the free product $G = G_1 \ast \cdots \ast G_k$. Suppose that $\alpha \in \FR(G,X,\mu)$ and $\beta \in A(G,X,\mu)$. Let $F \subset G$ be a finite subset, let $\varepsilon >0$, and let $A_1,\ldots,A_n \subset X$ be Borel subsets.
Then there exists $\gamma \in [\alpha]_\mathit{OE}$ such that for any $j\in \{ 1,\ldots,n\}$ and $g\in F$, we have $\mu(g^{\gamma} A_j \bigtriangleup g^\beta A_j) < \varepsilon$.
Therefore for any $\alpha \in \FR(G,X,\mu)$, its orbit equivalence class $[\alpha]_{OE}$ is weakly dense in $A(G,X,\mu)$.
\end{theorem}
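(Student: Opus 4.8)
The plan is to build $\gamma$ one free factor at a time. The decisive structural fact is that, because $G=G_1\ast\cdots\ast G_k$ is a free product, specifying an action $\gamma\in A(G,X,\mu)$ is the same as specifying, with no compatibility constraint whatsoever, an arbitrary action $\gamma_i:=\gamma|G_i$ of each $G_i$ on $(X,\mu)$; and the orbit equivalence relation $\mathcal R_\gamma$ generated by $\gamma$ is then the join $\bigvee_{i=1}^{k}\mathcal R_{\gamma_i}$, just as $\mathcal R_\alpha=\bigvee_i\mathcal R_{\alpha|G_i}$. So it suffices to produce, for each $i$, an action $\gamma_i$ of $G_i$ on $(X,\mu)$ with $\mathcal R_{\gamma_i}=\mathcal R_{\alpha|G_i}$ — whence the glued action $\gamma$ satisfies $\mathcal R_\gamma=\mathcal R_\alpha$, i.e.\ $\gamma\in[\alpha]_\mathrm{OE}$ realized with $R=\mathrm{id}$ — which in addition approximates $\beta|G_i$ weakly on a prescribed finite amount of data. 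The data and the required precision are dictated by the telescoping estimate from the proof of Lemma~\ref{ergdense}: picking $N\in\N$ and finite $F_i\subset G_i$ with $F\subset L:=(F_1\cup\cdots\cup F_k)^N$, it is enough that $\mu(h^{\gamma_i}C\bigtriangleup h^{\beta}C)<\varepsilon/N$ for every $i$, every $h\in F_i$ and every $C$ in the finite family $\mathcal C:=\{\,l^{\beta}A_j : l\in L,\ 1\le j\le n\,\}$. (This is exactly the mechanism that would fail if the $G_i$ were finite: for a finite group and a free action, the orbit equivalence class is not weakly dense, so the per-factor approximation is impossible — which is why the proof does not reach $\mathit{PSL}(2,\Z)$.)

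Before the main construction I would make two reductions. First, by Lemma~\ref{ergdense}, after shrinking $\varepsilon$ and replacing $\beta$ by a weakly-close action (which only alters the finite family $\mathcal C$), I may assume each $\beta|G_i$ is essentially free and ergodic. Second, by Proposition~\ref{abtile} every $G_i$ is monotilable, so for a small $\delta>0$ to be fixed at the end I may choose, for each $i$, an $(F_i,\delta)$-invariant tile $T_i$ for $G_i$ with $F_i\subset T_i$, and as invariant as I please relative to $F_i$. I then encode the targets: fix a finite Borel partition $\mathcal P$ of $X$ refining $\mathcal C$ and fine enough that any $G_i$-action whose joint $(T_i,\mathcal P)$-statistics agree with those of $\beta$ up to $\delta$ automatically lies within $\varepsilon/N$ of $\beta|G_i$ on $(F_i,\mathcal C)$, and let $\pi$ be the distribution on the finite set $\mathcal A:=\prod_{i=1}^{k}\mathcal P^{T_i}$ of the map sending $x\in X$ to the tuple of its $(T_i,\mathcal P)$-names under $\beta|G_i$, $i=1,\dots,k$. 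Applying Lemma~\ref{goodpart} to $\alpha$ with this $(\mathcal A,\pi)$ and a suitably small parameter yields a Borel map $\psi\colon X\to\mathcal A$ with $\psi_*\mu=\pi$ that is simultaneously \emph{generic} for every $\alpha|G_i$: off a set of measure $<\varepsilon$, the $\psi$-distribution along the $\alpha|G_i$-ergodic component through $x$ is within $2\varepsilon$ of $\pi$.

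The heart of the argument is the per-factor construction, modelled on Foreman--Weiss' copying argument in the proof of \ci{Theorem 16}{fw}. Fix $i$ and let $\psi_i\colon X\to\mathcal P^{T_i}$ be the $i$-th coordinate of $\psi$. Using the strengthened Rohlin lemma for tiles, Lemma~\ref{Rohlin}, applied to the essentially free abelian action $\alpha|G_i$ — the ``avoid a small set'' clause being what keeps the auxiliary towers occurring at successive stages of the construction pairwise disjoint, which is possible because $G_i$ is infinite and abelian — I take an $\alpha|G_i$-Rohlin tower over $T_i$ filling all but a $\delta$-fraction of $X$, and then \emph{copy} $\beta|G_i$ onto it: the genericity of $\psi_i$ along $\alpha|G_i$-orbits lets me carve the tower into sub-columns over which $\psi_i$ realizes each prescribed $\beta|G_i$-$(T_i,\mathcal P)$-name with (essentially) its $\pi$-frequency, on such a sub-column $\gamma_i$ is declared to move within $T_i$ exactly as $\beta|G_i$ does on a $T_i$-block of that name, and at the $F_i$-boundary of $T_i$ — a $\delta$-fraction, since $T_i$ is $(F_i,\delta)$-invariant — $\gamma_i$ is re-routed back into the tower along the ambient orbit relation $\mathcal R_{\alpha|G_i}$, the re-routing together with an exhaustion over the residual $\delta$-part of $X$ being arranged so that $\gamma_i$ is a genuine $\mu$-preserving $G_i$-action whose orbits are exactly the $\alpha|G_i$-orbits. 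Off an $O(\delta)$-set the $(T_i,\mathcal P)$-statistics of $\gamma_i$ match those of $\beta|G_i$, so a small enough $\delta$ gives $\mu(h^{\gamma_i}C\bigtriangleup h^{\beta}C)<\varepsilon/N$ for all $h\in F_i$ and $C\in\mathcal C$.

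Gluing the $\gamma_i$ gives $\gamma\in A(G,X,\mu)$ with $\mathcal R_\gamma=\bigvee_i\mathcal R_{\gamma_i}=\bigvee_i\mathcal R_{\alpha|G_i}=\mathcal R_\alpha$, hence $\gamma\in[\alpha]_\mathrm{OE}$, and the telescoping estimate yields $\mu(g^{\gamma}A_j\bigtriangleup g^{\beta}A_j)<\varepsilon$ for all $g\in F$ and $1\le j\le n$; the concluding sentence of the theorem then follows at once, since the sets $\{\,\gamma'\in A(G,X,\mu):\mu(g^{\gamma'}A_j\bigtriangleup g^{\beta}A_j)<\varepsilon\text{ for all }g\in F,\ 1\le j\le n\,\}$ form a base of weak neighbourhoods of an arbitrary $\beta$. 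The step I expect to be the real obstacle is the copying-and-re-routing in the third paragraph: turning the generic labelling $\psi_i$ and the $\alpha|G_i$-Rohlin tower into a genuine measure-preserving $G_i$-action whose orbit equivalence relation is, on the nose, that of $\alpha|G_i$, while its finite-Følner statistics track those of $\beta|G_i$. Making the boundary re-routing simultaneously respect the group law of $G_i$, preserve $\mu$, and leave the orbit relation unchanged — and stitching this together with the exhaustion of the residual part, with the $\psi_i$-equidistribution over the tower that genericity must be massaged to provide — is the delicate point; it is exactly here that one uses that $G_i$ is infinite (so the tile boundary, hence the part to be re-routed, is negligible and there is room for it) and abelian (so that translates of a tile stay pairwise disjoint, cf.\ Lemma~\ref{Rohlin}), and this is also the feature that collapses for free products of finite groups.
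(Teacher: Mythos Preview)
Your overall architecture is right, and so is the list of ingredients, but there is a genuine gap in the copying step that is prior to and independent of the re-routing difficulty you flag at the end. The map $\psi$ you obtain from Lemma~\ref{goodpart} is an \emph{abstract} colouring of $X$ by the alphabet $\mathcal A=\prod_i\mathcal P^{T_i}$: it is distributed like the tuple of $\beta$-names and it is generic along each $\alpha|G_i$, but it bears no relation to the \emph{actual} partition $\mathcal P$ of $X$. When you ``declare $\gamma_i$ to move within $T_i$ exactly as $\beta|G_i$ does on a $T_i$-block of that name'', on the tower interior you are simply setting $h^{\gamma_i}(t^{\alpha}x)=(ht)^{\alpha}x$; the $(T_i,\mathcal P)$-name of a base point under $\gamma_i$ is then its $(T_i,\mathcal P)$-name under $\alpha|G_i$, which is governed by the ergodic-component measure $\mu_x^i$ restricted to $\mathcal P$ and need not be anywhere near the $\beta|G_i$-name distribution. (Concretely: if every $\alpha|G_i$-ergodic component happens to live inside a single $\mathcal P$-atom, every $\gamma_i$-name is constant, however $\psi$ is chosen.) Genericity of $\psi$ along $\alpha|G_i$-orbits says nothing about genericity of $\mathcal P$ along those orbits, so the asserted conclusion ``the $(T_i,\mathcal P)$-statistics of $\gamma_i$ match those of $\beta|G_i$'' does not follow. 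What is missing is a step that \emph{transports} the good-partition labelling onto the target partition. The paper applies Lemma~\ref{goodpart} with the much smaller alphabet $\mathcal A$ indexing the partition $\phi$ generated by $\{g^\beta A_j\}_{g\in F,\,j}$, and then, since $\psi_*\mu=\phi_*\mu$, picks $R\in\Aut(X,\mu)$ with $R(\psi^{-1}(a))=\phi^{-1}(a)$ and replaces $\alpha$ by $\alpha':=R\alpha R^{-1}$; now the actual target partition $\phi$ is generic along each $\alpha'|G_i$, and the Foreman--Weiss matching can proceed.

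The paper also dissolves the re-routing obstacle you single out. Rather than defining $\gamma_i$ generator-by-generator and patching at the boundary, it builds Rohlin towers over the \emph{same} tile $T$ for both $\alpha'_i$ and $\beta_i$ (this is where Lemma~\ref{Rohlin} enters: its ``avoid a small set'' clause forces both bases into the set where the $T$-block $\phi$-statistics are within $3\varepsilon'$ of $\mu$), refines the two bases into equal-measure pieces $\{Q_s^{\alpha'_i}\}_s$, $\{Q_s^{\beta_i}\}_s$ with constant $T$-names over $\phi$, and for each $s$ finds a permutation $\sigma_s$ of $T$ (with $\sigma_s(e)=e$) matching the two $\phi$-names on all but $O(\varepsilon'|\mathcal A|\,|T|)$ levels. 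One then sets $S_i:=\sigma_s(t)^{\alpha'_i}(t^{-1})^{\alpha'_i}$ on $t^{\alpha'_i}Q_s^{\alpha'_i}$ and $S_i:=\mathrm{id}$ off the tower, and defines $\gamma_i:=S_i\alpha'_iS_i^{-1}$. Being a conjugate of $\alpha'_i$ by an element of its full group, $\gamma_i$ is automatically a genuine p.m.p.\ $G_i$-action with exactly the $\alpha'_i$-orbits --- no boundary re-routing, no exhaustion, and no worry about the group law.
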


\begin{proof}
By Lemma \ref{ergdense}, we may assume that for each $i\in \{ 1, \ldots, k\}$, the restriction $\beta|G_i$ is essentially free and ergodic. Moreover, using a similar argument as in the proof of Lemma \ref{ergdense}, we may assume that $F$ is the union $F_1 \cup  \cdots \cup F_k$, where $F_i$ is a finite subset of $G_i$.

We take the finite set $\mathcal A$ and the Borel map $\phi \colon X \to \mathcal A$ such that $\{ \phi^{-1}(a)\}_{a \in \mathcal A}$ is the Borel partition of $X$ generated by the family $\{ g^{\beta}A_j \}_{g\in F, 1\leq j\leq n}$. Let $\varepsilon' = \varepsilon/(24|\mathcal A|)$. By applying Lemma \ref{goodpart} to the action $\alpha$, the probability measure $\phi_\ast \mu$ on $\mathcal A$, and the number $\varepsilon'$, we obtain a Borel map $\psi \colon X\to \mathcal A$ such that $\psi_\ast \mu = \phi_\ast \mu$ and for each $i\in \{ 1,\ldots,k\}$, if $\mu = \int_{X} \mu_x^i \,d\mu(x)$ is the ergodic decomposition of $\mu$ with respect to $\alpha|G_i$, we have
\[
\mu(\{\, x\in X \mid \max_{a \in \mathcal A} |(\psi_\ast\mu_x^i)(a)-(\psi_\ast\mu)(a) | >2\varepsilon' \,\}) <\varepsilon'.
\]
Since $\psi_\ast \mu = \phi_\ast \mu$ and $(X,\mu)$ is a non-atomic standard probability space, there exists $R \in \Aut(X,\mu)$ such that $R(\psi^{-1}(a)) = \phi^{-1}(a)$ for all $a\in \mathcal A$. We set $\alpha' = R\alpha R^{-1} \in \FR(G,X,\mu)$ and set $\alpha_i = \alpha|G_i$, $\alpha'_i = \alpha'|G_i$ and $\beta_i = \beta|G_i$.
We fix $i \in \{1,\ldots,k\}$ throughout Claims \ref{erg+Rohlin}-- \ref{weakapp}.

\begin{claim}\label{erg+Rohlin}
There exist Borel subsets $B^{\alpha'_i}, B^{\beta_i} \subset X$ and an $(F_i,\varepsilon')$-invariant tile $T$ for $G_i$ such that $\mu(B^{\alpha'_i})=\mu(B^{\beta_i})$, $e \in T$, $|T| > 1/\varepsilon'$, and the following conditions (1)--(3) and (1')--(3') hold:

\begin{enumerate}
\item The family $\{ t^{\alpha'_i} B^{\alpha'_i} \}_{t\in T}$ is pairwise disjoint.
\item We have $\mu ( \bigcup_{t\in T} t^{\alpha'_i}B^{\alpha'_i} ) > 1-8\varepsilon'$. 
\item For all $x \in B^{\alpha'_i}$ and $a \in \mathcal A$, we have
\[
\left| \displaystyle\frac{ |\{\, t\in T \mid t^{\alpha'_i}x \in \phi^{-1}(a) \,\}| }{|T|} - \mu(\phi^{-1}(a)) \right| \leq 3\varepsilon'.
\]
\end{enumerate} 
\begin{enumerate}
\renewcommand{\labelenumi}{(\arabic{enumi}')}
\item The family $\{ t^{\beta_i} B^{\beta_i} \}_{t\in T}$ is pairwise disjoint.
\item We have $\mu ( \bigcup_{t\in T} t^{\beta_i}B^{\beta_i} ) > 1-8\varepsilon'$. 
\item For all $x \in B^{\beta_i}$ and $a \in \mathcal A$, we have
\[
\left| \displaystyle\frac{ |\{\, t\in T \mid t^{\beta_i}x \in \phi^{-1}(a) \,\}| }{|T|} - \mu(\phi^{-1}(a)) \right| \leq 3\varepsilon'.
\]
\end{enumerate}  
\end{claim}

\begin{proof}
By the proof of Proposition \ref{abtile}, there exists a F\o lner sequence $\{D_n\}_{n\in \N}$ for $G_i$ such that each $D_n$ is a tile for $G_i$ and contains the identity $e$.
Note that for any $\delta \in A(G_i, X, \mu)$ and any Borel subset $C\subset X$, we have
\[
A_{D_n}^\delta[1_{C}](x) = \displaystyle\frac{ |\{\, g\in D_n \mid g^{\delta}x \in C \,\}| }{|D_n|}
\]
for any $x\in X$.
By the $L^1$ version of the mean ergodic theorem, it follows that for each $a\in \mathcal A$, we have
\[
\displaystyle\lim_{n\to \infty} \| A_{D_n}^{\alpha_i}[1_{\psi^{-1}(a)}] - \mathbb E(1_{\psi^{-1}(a)}) \|_{L^1(\mu)} = 0,
\]
where $\mathbb E(1_{\psi^{-1}(a)})$ is the conditional expectation of $1_{\psi^{-1}(a)}$ onto the space of $\alpha_i$-invariant functions in $L^1(X,\mu)$. Then there exists $n_1 \in \N$ such that for every $n \geq n_1$, we have
\[
 \mu(\{\, x\in X \mid \ \max_{a\in \mathcal A}| A_{D_n}^{\alpha_i}[1_{\psi^{-1}(a)}](x) - \mathbb{E}(1_{\psi^{-1}(a)})(x) | > \varepsilon' \,\} ) < \varepsilon'.
\]
Recall that $\mu =\int_X\mu_x^i\, d\mu(x)$ denotes the ergodic decomposition of $\mu$ with respect to $\alpha_i$. Then $\mathbb{E}(1_{\psi^{-1}(a)})(x) = \mu_x^i(\psi^{-1}(a))$ for any $a\in \mathcal{A}$ and $\mu$-almost every $x \in X$. We set
\[
X_n^{\alpha_i} = \{\, x\in X \mid \ \max_{a \in \mathcal A}| A_{D_n}^{\alpha_i}[1_{\psi^{-1}(a)}](x) - \mu_x^i(\psi^{-1}(a)) | \leq \varepsilon' \  \text{and} \ \max_{a\in \mathcal A} |(\psi_\ast \mu_x^i)(a)-(\psi_\ast\mu)(a)|  \leq 2\varepsilon'  \,\}
\]
for $n \in \N$. Then $\mu(X_n^{\alpha_i}) > 1-2\varepsilon'$ for every $n \geq n_1$. 
If $x \in X_n^{\alpha_i}$, then for each $a \in \mathcal A$, we have
\begin{align*}
& \left| A_{D_n}^{\alpha_i}[1_{\psi^{-1}(a)}](x) - \mu(\psi^{-1}(a)) \right| \\
&\leq \left| A_{D_n}^{\alpha_i}[1_{\psi^{-1}(a)}](x) - \mu_x^i (\psi^{-1}(a)) \right| + |\mu_x^i (\psi^{-1}(a)) - \mu(\psi^{-1}(a))| \leq \varepsilon' + 2\varepsilon' = 3\varepsilon'.
\end{align*}
We set $X_n^{\alpha'_i} = RX_n^{\alpha_i}$. For each $g \in G_i$ and $x\in X$, by the definition of $R$ and $\alpha'$, we have $g^{\alpha_i}x \in \psi^{-1}(a)$ if and only if $g^{\alpha'_i}Rx \in \phi^{-1}(a)$. Combining this with $\psi_\ast \mu = \phi_\ast \mu$, for all $x\in X_n^{\alpha'_i}$ and $a\in \mathcal A$, we have
\[
\left| A_{D_n}^{\alpha'_i}[1_{\phi^{-1}(a)}](x) - \mu(\phi^{-1}(a)) \right| \leq  3\varepsilon'.
\]
On the other hand, since $\beta_i$ is ergodic, there is $n_2 \in \N$ such that for every $n \geq n_2$, we have
\[
\mu( \{\, x \in X \mid | A_{D_n}^{\beta_i} [1_{\phi^{-1}(a)}](x) - \mu(\phi^{-1}(a)) | > 3\varepsilon' \,\} ) < 2\varepsilon'.
\]
We set $X_n^{\beta_i} = \{\, x \in X \mid \max_{a\in \mathcal{A}}| A_{D_n}^{\beta_i} [1_{\phi^{-1}(a)}](x) - \mu(\phi^{-1}(a)) | \leq 3\varepsilon' \,\}$.

We construct Rohlin towers for $\alpha_i'$ and $\beta_i$. Let $m$ be an integer such that $m \geq \max\{n_1,n_2\}$ and $|D_{m}| > 1/\varepsilon'$. We set $A=X \setminus (X_{m}^{\alpha'_i} \cap X_{m}^{\beta_i})$ and set $T=D_{m}$. Since $\mu(X_{m}^{\alpha'_i}) > 1-2\varepsilon'$ and $\mu(X_{m}^{\beta_i}) > 1-2\varepsilon'$, we have $\mu(A) < 4\varepsilon'$. We apply Lemma \ref{Rohlin} to the tile $T$ and the Borel subset $A \subset X$, and then obtain Borel subsets $B^{\alpha_i'}, B^{\beta_i} \subset X$ disjoint from $A$ and satisfying conditions (1), (2), (1') and (2'). Since $B^{\alpha_i'}$ and $B^{\beta_i}$ are disjoint from $A$, conditions (3) and (3') also hold. After replacing one of $B^{\alpha'_i}$ and $B^{\beta_i}$ into its Borel subsets, we may further assume that $\mu(B^{\alpha'_i}) = \mu(B^{\beta_i})$.
\end{proof}

\begin{claim}\label{basepart}
Let $B^{\alpha'_i}, B^{\beta_i}\subset X$ be the Borel subsets and $T$ the tile for $G$ chosen in Claim \ref{erg+Rohlin}. 
Then there exist finite Borel partitions $\{ Q_s^{\alpha'_i} \}_{s=1}^r$ and $\{ Q_s^{\beta_i} \}_{s=1}^r$ of $B^{\alpha'_i}$ and $B^{\beta_i}$, respectively, such that for each $s \in \{ 1,\ldots,r\}$, the following properties hold:
\begin{enumerate}
\renewcommand{\labelenumi}{(\alph{enumi})}
\item The equation $\mu(Q_s^{\alpha'_i}) = \mu(Q_s^{\beta_i})$ holds.
\item For every $t\in T$, we have $t^{\alpha'_i} Q_s^{\alpha'_i} \subset \phi^{-1}(a)$ and $t^{\beta_i} Q_s^{\beta_i} \subset \phi^{-1}(a')$ for some $a,a' \in \mathcal A$.
\item There exist a subset $T_s\subset T$ and a permutation $\sigma_s$ of $T$ such that $|T \setminus T_s| < 7 \varepsilon' |\mathcal A||T|$, $\sigma_s(e)=e$, and for any $t \in T_s$ and any $a \in \mathcal A$, we have 
\begin{equation}\label{equiv1}
\sigma_s(t)^{\alpha'_i}Q_s^{\alpha'_i} \subset \phi^{-1}(a) \Leftrightarrow t^{\beta_i} Q_s^{\beta_i} \subset \phi^{-1}(a).
\end{equation}
\end{enumerate}
\end{claim}

\begin{proof}
We rely on the proof of \cite[Theorem 16]{fw}. 
Let $\{ P_l^{\alpha'_i}\}_l$ be the finite partition of $B^{\alpha'_i}$ generated by $\{ (t^{-1})^{\alpha'_i}(t^{\alpha'_i}B^{\alpha'_i} \cap \phi^{-1}(a))  \}_{t\in T, a \in \mathcal A}$, and $\{ P_m^{\beta_i}\}_m$ be the finite partition of $B^{\beta_i}$ generated by $\{ (t^{-1})^{\beta_i}(t^{\beta_i}B^{\beta_i} \cap \phi^{-1}(a))  \}_{t\in T, a \in \mathcal A}$. Since $\mu(B^{\alpha_i'})=\mu(B^{\beta_i})$, we can take a refinement $\{ Q_s^{\alpha'_i} \}_{s=1}^r$ of $\{ P_l^{\alpha'_i}\}_l$ and a refinement $\{ Q_s^{\beta_i} \}_{s=1}^r$ of $\{ P_m^{\beta_i} \}_m$ such that $\{ Q_s^{\alpha'_i} \}_{s=1}^r$ and $\{ Q_s^{\beta_i} \}_{s=1}^r$ satisfy condition (a). By the construction of $\{ P_l^{\alpha'_i}\}_l$ and $\{ P_m^{\beta_i}\}_m$, they also satisfy condition (b).

We fix $s \in \{1,\ldots,r \}$. We find a subset $T_s$ of $T$ and a permutation $\sigma_s$ of $T$ which satisfy condition (c). By conditions (3) and (3') in Claim \ref{erg+Rohlin}, we have
\[
\left| |\{\, t\in T \mid t^{\alpha'_i}x \in \phi^{-1}(a) \,\} | - | \{\, t\in T \mid t^{\beta_i}x' \in \phi^{-1}(a) \,\} | \right| \leq 6\varepsilon'|T|,
\]
for any $a \in \mathcal A$, $x \in B^{\alpha'_i}$ and $x' \in B^{\beta_i}$.
For each $a\in \mathcal A$, the set $\{\, t\in T \mid t^{\alpha'_i}x \in \phi^{-1}(a) \,\}$ is independent of $x\in Q_s^{\alpha'_i}$ because of part (b) of the claim, which has been established already. The same is true for $\beta_i$ in place of $\alpha_i'$.
Therefore we can choose a bijection $\sigma_s \colon T \to T$ such that $\sigma_s(e)=e$ and for each $a\in \mathcal A$, we have
\[
| \sigma_s( \{\, t\in T \mid t^{\beta_i}Q_s^{\beta_i} \subset \phi^{-1}(a) \,\} ) \setminus \{\, t\in T  \mid t^{\alpha'_i}Q_s^{\alpha'_i} \subset \phi^{-1}(a) \,\} | \leq 6\varepsilon'|T|+1.
\]
The condition $\sigma_s(e)=e$ will be used in the proof of Claim \ref{weakapp}. We set
\[
T_s = \{\, t \in T \mid t^{\beta_i}Q_s^{\beta_i}\subset \phi^{-1}(a)\ \text{and}\ \sigma_s(t)^{\alpha'_i}Q_s^{\alpha'_i}\subset \phi^{-1}(a)\ {\rm for\ some\ }a\in \mathcal A \,\}.
\]
Then $|T \setminus T_s| \leq (6 \varepsilon' |T|+1)|\mathcal A| < 7\varepsilon'|\mathcal A||T|$ since $|T| > 1/\varepsilon'$, and condition (\ref{equiv1}) holds for any $t \in T_s$ and any $a\in \mathcal A$.
\end{proof}

For an action $\delta \in A(G,X,\mu)$, let $[\delta]_\mathit{SO}$ denote the set of all actions $\eta \in A(G,X,\mu)$ that have the same orbits as $\delta$, i.e., satisfy $G^\delta x = G^\eta x$ for $\mu$-almost every $x \in X$.

\begin{claim}\label{weakapp}
There exists $S_i \in \Aut(X,\mu)$ such that the action $\alpha''_i \coloneqq S_i \alpha'_i S_i^{-1}$ belongs to $[\alpha'_i]_\mathit{SO}$ and for any $g \in F_i$ and any $j\in \{ 1,\ldots,n\}$, we have $\mu(g^{\alpha''_i} A_j \bigtriangleup g^{\beta_i} A_j) < \varepsilon$.
\end{claim}

\begin{proof}

We define $S_i \in \Aut(X,\mu)$ by $S_i = \sigma_s(t)^{\alpha'_i}(t^{-1})^{\alpha'_i}$ on $t^{\alpha'_i}Q_s^{\alpha'_i}$ for $s\in \{ 1,\ldots,r\}$ and $t \in T$ and by $S_i = \text{id}$ on $X \setminus TB^{\alpha'_i}$. We set $\alpha''_i=S_i \alpha'_i S^{-1}_i$. Then $\alpha''_i \in [\alpha'_i]_\mathit{SO}$, and $S_iQ_s^{\alpha_i'}=Q_s^{\alpha_i'}$ since $e\in T$ and $\sigma_s(e)=e$.
By condition (\ref{equiv1}), for any $s\in \{ 1,\ldots,r\}$, any $t \in T_s$ and any $a\in \mathcal A$, we have
\begin{equation}\label{equiv2}
t^{\alpha''_i}Q_s^{\alpha'_i} \subset \phi^{-1}(a) \Leftrightarrow t^{\beta_i} Q_s^{\beta_i} \subset \phi^{-1}(a)
\end{equation}
since $S_i t^{\alpha'_i} S^{-1}_i Q_s^{\alpha'_i} = \sigma_s(t)^{\alpha'_i} Q_s^{\alpha'_i}$.

Fix $g \in F_i$ and $j \in \{1,\ldots,n\}$ arbitrarily.
We set
\[L_0 = X \setminus \bigcup_{t\in T} t^{\alpha''_i}B^{\alpha'_i},\quad L_1 = \bigcup_{t\in T \setminus gT} t^{\alpha''_i}B^{\alpha'_i}\quad \text{and} \quad L_2 = \bigcup_{s=1}^{r}\bigcup_{t \in T \setminus (T_s \cap gT_s)} t^{\alpha''_i}Q_s^{\alpha'_i},\]
We estimate the measures of $L_0$, $L_1$ and $L_2$. We have $\mu(L_0) < 8\varepsilon'$ by condition (2) in Claim \ref{erg+Rohlin}.
Since the set $T$ is $(F_i,\varepsilon')$-invariant and $g \in F_i$, we have $|T \setminus gT| < \varepsilon' |T|$ and hence $\mu(L_1) < \varepsilon'|T|\mu(B^{\alpha'_i}) \leq \varepsilon'$.
Since $|T \setminus gT_s| \leq |T \setminus gT| + |gT \setminus gT_s| = |T \setminus gT| + |T \setminus T_s|$, we have
\[
\mu(L_2) \leq \sum_{s=1}^{r}(|T \setminus gT| + 2|T \setminus T_s|)\mu(Q_s^{\alpha'_i}) < (\varepsilon'|T| + 14\varepsilon'|\mathcal A||T|)\mu(B^{\alpha'_i}) \leq 15|\mathcal A|\varepsilon'.
\]

We claim that $\mu ( (g^{\alpha''_i} A_j \bigtriangleup g^{\beta_i} A_j) \setminus ( L_0 \cup L_1 \cup L_2 ) ) = 0$.
Suppose otherwise, i.e., some $s \in \{1,\ldots,r\}$ and $t \in gT \cap (T_s \cap gT_s)$ satisfy $\mu(t^{\alpha''_i}Q_s^{\alpha'_i} \cap (g^{\alpha''_i} A_j \bigtriangleup g^{\beta_i} A_j)) >0$. By condition (\ref{equiv2}), there exists $a \in \mathcal A$ such that
\begin{equation}\label{cond1}
t^{\alpha''_i}Q_s^{\alpha'_i} \subset \phi^{-1}(a) \quad \text{and} \quad t^{\beta_i}Q_s^{\beta_i} \subset \phi^{-1}(a),
\end{equation}
and there exists $a'\in \mathcal{A}$ such that
\begin{equation}\label{cond2}
{(g^{-1}t)}^{\alpha''_i}Q_s^{\alpha'_i} \subset \phi^{-1}(a') \quad \text{and} \quad {(g^{-1}t)}^{\beta_i}Q_s^{\beta_i} \subset \phi^{-1}(a').
\end{equation}
If $\mu(t^{\alpha''_i}Q_s^{\alpha'_i} \cap (g^{\alpha''_i} A_j \setminus g^{\beta_i} A_j)) >0$, then by condition (\ref{cond2}), we have $\phi^{-1}(a') \subset A_j$ and hence $t^{\beta_i} Q_s^{\beta_i} \subset g^{\beta_i} A_j$. Combining this with condition (\ref{cond1}), we have $\phi^{-1}(a) \subset g^{\beta_i}A_j$ and hence $t^{\alpha''_i}Q_s^{\alpha'_i} \subset g^{\beta_i}A_j$, which contradicts the assumption $\mu(t^{\alpha''_i}Q_s^{\alpha'_i} \cap (g^{\alpha''_i} A_j \setminus g^{\beta_i} A_j)) >0$.
On the other hand, if $\mu(t^{\alpha''_i}Q_s^{\alpha'_i} \cap (g^{\beta_i} A_j \setminus g^{\alpha''_i} A_j)) >0$, then by condition (\ref{cond1}), we have $\phi^{-1}(a) \subset g^{\beta_i}A_j$ and hence $(g^{-1}t)^{\beta_i}Q_s^{\beta_i} \subset A_j$. Combining this with condition (\ref{cond2}), we have $\phi^{-1}(a') \subset A_j$ and hence $t^{\alpha''_i}Q_s^{\alpha'_i} \subset g^{\alpha''_i}A_j$, which contradicts the assumption $\mu(t^{\alpha''_i}Q_s^{\alpha'_i} \cap (g^{\beta_i} A_j \setminus g^{\alpha''_i} A_j)) >0$.
Therefore we have  $\mu(t^{\alpha''_i}Q_s^{\alpha'_i} \cap (g^{\alpha''_i} A_j \bigtriangleup g^{\beta_i} A_j)) =0$ for any $s \in \{ 1,\ldots,r\}$ and any $t \in gT \cap (T_s \cap gT_s)$. As a result, we have $\mu ( (g^{\alpha''_i} A_j \bigtriangleup g^{\beta_i} A_j) \setminus ( L_0 \cup L_1 \cup L_2 ) ) = 0$ and
\[
\mu ( g^{\alpha''_i} A_j \bigtriangleup g^{\beta_i} A_j) \leq \mu(L_0) + \mu(L_1) + \mu(L_2) < 8\varepsilon' + \varepsilon' + 15|\mathcal A|\varepsilon' \leq 24|\mathcal A|\varepsilon' =\varepsilon.
\]
This holds for any $g \in F_i$ and any $j\in \{ 1,\ldots,n\}$.
\end{proof}

Let $\gamma \in A(G, X, \mu)$ be the action defined by $\gamma |G_i = \alpha''_i$ for each $i\in \{ 1, \ldots, k\}$. Then we have $\gamma \in [\alpha]_\mathit{OE}$ since $\alpha' \in [\alpha]_\mathit{MC}$ and $\alpha''_i \in [\alpha'_i]_\mathit{SO}$ for every $i\in \{ 1,\ldots,k\}$.
Claim \ref{weakapp} shows that $\gamma$ is the desired action.
This completes the proof of Theorem \ref{main}.
\end{proof}

\begin{proof}[Proof of Theorem \ref{thm-intro}]
It remains to show the theorem for a free product of (countably) infinitely many, infinite abelian groups. We show this, following \ci{Corollary 2.2}{bo}. 
Let $G=G_1 \ast G_2 \ast \cdots$ be the free product of infinite abelian groups $G_i$ with $i\in \N$.
Suppose that $\alpha \in \FR(G,X,\mu)$ and $\beta \in A(G,X,\mu)$. Let $F \subset G$ be a finite subset, let $\varepsilon >0$, and let $A_1,\ldots,A_n \subset X$ be Borel subsets. We set $H_k = G_1 \ast \cdots \ast G_k$ for $k \in \N$. Pick $k \in \N$ such that $F \subset H_k$. By Theorem \ref{main}, there exists an action $\alpha' \in A(H_k,X,\mu)$ such that $\alpha' \in [\alpha|H_k]_\mathit{OE}$ and $\mu(g^{\alpha'} A_j \bigtriangleup g^{\beta} A_j ) < \varepsilon$ for any $g \in F$ and $j\in \{ 1,\ldots,n\}$. Since $\alpha' \in [\alpha|H_k]_\mathit{OE}$, there exists $R \in \Aut(X,\mu)$ such that $R^{-1} \alpha' R \in [\alpha|H_k]_\mathit{SO}$. We define an action $\gamma \in A(G,X,\mu)$ by $\gamma|H_k = \alpha'$ and $g^{\gamma} = Rg^{\alpha}R^{-1}$ for $g \in G_m$ with $m \geq k+1$. Then $\gamma$ is orbit equivalent to $\alpha$ via $R$ and satisfies $\mu(g^\gamma A_j\bigtriangleup g^\beta A_j)<\varepsilon$ for any $g\in F$ and $j\in \{ 1,\ldots, n\}$.
\end{proof}


\end{document}